\newcolumntype{d}[1]{D{.}{.}{#1}}
\theoremstyle{definition}
\newtheorem{definition}{Definition}
\newtheorem{proposition}{Proposition}
\numberwithin{equation}{section}
\numberwithin{definition}{section}
\numberwithin{theorem}{section}
\numberwithin{remark}{section}
\numberwithin{proposition}{section}
\numberwithin{corollary}{section}
\newcommand{\dif}{\text{d}}
\DeclareMathOperator{\sech}{sech}
\DeclareMathOperator{\csch}{csch}
\newcommand{\ri}{\text{i}}
\newcommand{\rd}{\text{d}}
\newcommand{\D}{\displaystyle}
\begin{document}
\title{Non-self-adjoint sixth-order eigenvalue problems arising from clamped elastic thin films on closed domains}

\author{N C Papanicolaou$^1$ and I C Christov$^{2}$}

\address{$^1$ Department of Computer Science, University of Nicosia, 46 Makedonitissas Avenue, CY-2417 Nicosia, Cyprus}
\address{$^2$ School of Mechanical Engineering, Purdue University, West Lafayette, Indiana 47907, USA}

\ead{papanicolaou.n@unic.ac.cy,christov@purdue.edu}

\begin{abstract}
	Sixth-order boundary value problems (BVPs) arise in thin-film flows with a surface that has elastic bending resistance. We consider the case in which the elastic interface is clamped at the lateral walls of a closed trough and thus encloses a finite amount of fluid. For a slender film undergoing infinitesimal deformations, the displacement of the elastic surface from its initial equilibrium position obeys a sixth-order (in space) initial boundary value problem (IBVP). To solve this IBVP, we construct a set of odd and even eigenfunctions that intrinsically satisfy the boundary conditions (BCs) of the original IBVP. These eigenfunctions are the solutions of a non-self-adjoint sixth-order eigenvalue problem (EVP). To use the eigenfunctions for series expansions, we also construct and solve the adjoint EVP, leading to another set of even and odd eigenfunctions, which are orthogonal to the original set (biorthogonal). The eigenvalues of the adjoint EVP are the same as those of the original EVP, and we find accurate asymptotic formulas for them. Next, employing the biorthogonal sets of eigenfunctions, a Petrov--Galerkin spectral method for sixth-order problems is proposed, which can also handle lower-order terms in the IBVP. The proposed method is tested on two model sixth-order BVPs, which admit exact solutions. We explicitly derive all the necessary formulas for expanding the quantities that appear in the model problems into the set(s) of eigenfunctions. For both model problems, we find that the approximate Petrov--Galerkin spectral solution is in excellent agreement with the exact solution. The convergence rate of the spectral series is rapid, exceeding the expected sixth-order algebraic rate.
\end{abstract}




\section{Introduction}\label{sec:Intro}

We study the initial-boundary-value problem (IBVP) governing the small deformations of a clamped thin film with elastic surface resistance over a closed trough (section~\ref{sec:Target}), extending the investigation of Gabay \textit{et al}.~\cite{Gabay2023} on pinned films with surface tension in closed troughs. The corresponding sixth-order eigenvalue problem is \emph{not} self-adjoint. Building on our earlier works on self-adjoint sixth-order IBVPs \cite{NectarIvan2023,NectarIvan2024}, we construct the eigenfunctions and the \emph{adjoint} eigenfunctions (section~\ref{sec:EVP}). We use the latter in a Petrov--Galerkin framework (section~\ref{sec:Galerkin}) and apply this approach to two elementary model problems (section~\ref{sec:model_problems}) to demonstrate the order of convergence of the spectral expansion.

\section{Problem formulation}\label{sec:Target}
 
\subsection{Clamped thin film with bending resistance over a closed trough}

Consider a thin fluid film of equilibrium height $h_0$ and dynamic height $h(x,t)$, as shown in figure~\ref{fig:elastic_film_shematic}. The film's surface has elastic bending resistance $B$ but no inertia (the surface is idealized as a massless interface). Likewise, it is assumed that out-of-plane bending is the dominant elastic force on the interface, rather than in-plane tension, which can also be considered \cite{Hewitt2015,Tulchinsky2019,Boyko2019,Boyko2020}. The fluid is confined to a closed trough of width $2\ell$. Thus, fluid cannot leave or enter through the lateral boundaries at $x=\pm\ell$. The fluid film's surface is also considered clamped to its equilibrium height at the lateral boundaries, which for an elastic interface requires that $h|_{x=\pm\ell}=h_0$ \emph{and} $(\partial h/\partial x)|_{x=\pm\ell}=0$ $\forall t\ge 0$. The fluid is considered incompressible and Newtonian with (constant) density $\rho_f$ and (constant) dynamic viscosity $\mu_f$.  Gravity is the only {body} force. 

\begin{figure}[t]
	\begin{center}
		\includegraphics[width=.9\textwidth]{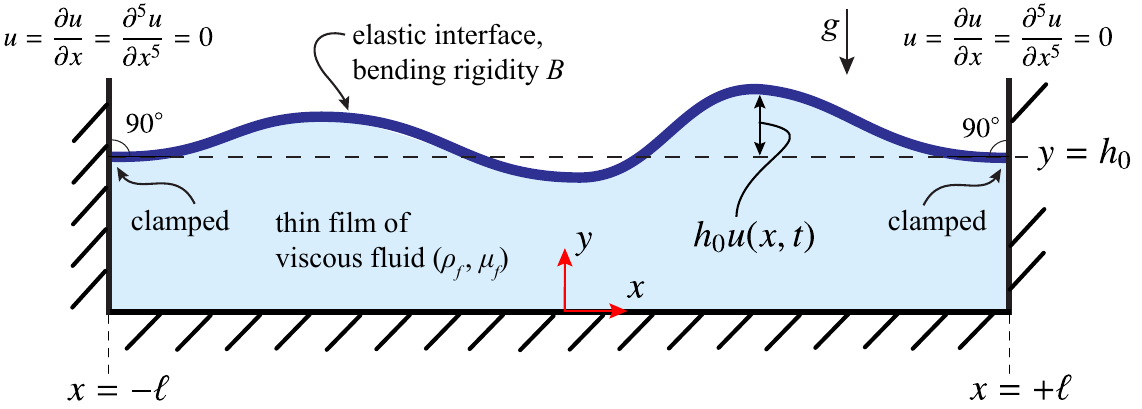}
	\end{center}
	\caption{Schematic illustration of the problem of a clamped elastic thin film on a closed domain. The elastic interface (with only out-of-plane bending rigidity and negligible mass) sits atop a viscous fluid of equilibrium height $h_0$ over a closed trough (no fluid flux through the lateral boundaries $x=\pm\ell$). Gravity is oriented in the $-y$ direction. When the interface is perturbed by an infinitesimal (dimensionless) displacement $u(x,t)$, the competition between the flow generated underneath it, its resistance to bending, and gravity sets the dynamics of leveling back to equilibrium, $u\to0$.}
	\label{fig:elastic_film_shematic}
\end{figure}

\subsection{Governing equations}

As shown in our previous works \cite{NectarIvan2023,NectarIvan2024,NectarIvan2025}, as well as by others in related context \cite{Flitton2004,Hosoi2004,Hewitt2015,Pedersen2019}, a linear sixth-order thin film equation can be derived in the long-wave (lubrication) limit \cite{Oron1997,Craster2009}, in which the equilibrium film is \emph{thin}, i.e., $h_0\ll\ell$, and remains thin for all time, i.e., $\max_{x,t} h(x,t)\ll \ell$. The governing equation can be written in the form of a conservation law:
\begin{subequations}\begin{align}
	\frac{\partial h}{\partial t} &= \frac{\partial q}{\partial x} , \label{eq:com}\\
	q &= \frac{h_0^3}{12\mu_f}\left( \rho_f g  \frac{\partial h}{\partial x} + B \frac{\partial^5 h}{\partial x^5} \right) . \label{eq:q}
\end{align}\label{eq:elastic_film}\end{subequations}
Here, \eqref{eq:com} expresses the conservation of mass (the so-called \emph{continuity equation}), and $q$ in \eqref{eq:q} is a \emph{flux} (area per time in this planar problem). 

It is convenient to make the governing equations~\eqref{eq:elastic_film} dimensionless using the following variable transformations:
\begin{equation}
	x = \ell\hat{x}, \qquad t = \frac{12\mu_f \ell^6}{Bh_0^3} \hat{t}, \qquad h = h_0 \hat{h}, \qquad q = \frac{Bh_0^4}{12\mu_f \ell^5} \hat{q} .
	\label{eq:elastic_film_ndvar}
\end{equation}
We have taken the dominant force on the interface to be due to bending resistance. Thus, the choice of the timescale for the problem is ${12\mu_f \ell^6}/(Bh_0^3)$, as in \cite{Tulchinsky2016,MartinezCalvo2020,NectarIvan2023,NectarIvan2025}.
Then, substituting the variables from \eqref{eq:elastic_film_ndvar} into \eqref{eq:elastic_film} and dropping the hats, we arrive at:
\begin{subequations}\begin{align}
	\frac{\partial h}{\partial t} &= \frac{\partial q}{\partial x} , \\
	q &= \mathcal{B} \frac{\partial h}{\partial x}  + \frac{\partial^5 h}{\partial x^5} . 
\end{align}\label{eq:elastic_film_nd3}\end{subequations}
Here, the elastic Bond number $\mathcal{B} = {\rho_f g \ell^4}/{B}$ quantifies the competition between gravity and bending resistance on the elastic interface (see also \cite{Duprat2011}).

Finally, we rewrite \eqref{eq:elastic_film_nd3} in terms of the dimensionless displacement $u$ from the equilibrium (i.e., letting $h=1+u$, where $u\ll1$) and combine the two subequations to arrive at the target sixth-order PDE featured in this work:
\begin{equation}
	\frac{\partial u}{\partial t} = \mathcal{B} \frac{\partial^2 u}{\partial x^2} + \frac{\partial^6 u}{\partial x^6}.
	\label{eq:elastic_film_linear}
\end{equation}

\subsection{Boundary conditions}

To solve \eqref{eq:elastic_film_linear}, we need six boundary conditions (BCs). Relevant boundary conditions for open troughs (domains) were discussed in \cite{Hosoi2004} in the context of a thin film with elastic resistance, and these were amplified and extended in \cite{NectarIvan2023} in the present context. Relevant boundary conditions for clamped films in closed troughs (domains) were discussed in \cite{Gabay2023}. Based on the latter results, we arrive at the six BCs (four on the film displacement and two on the flux) for our target problem of a clamped elastic film over a closed trough:
\begin{subequations}\label{eq:elastic_film_clamped_BC}
\begin{align}
	u|_{x=\pm1} &= 0, \label{eq:elastic_film_BC1} \\
	\left. \frac{\partial u}{\partial x}\right|_{x=\pm1} &= 0, \label{eq:elastic_film_BC2} \\
	q|_{x=\pm1} \equiv \left.\left(\mathcal{B} \frac{\partial u}{\partial x}  + \frac{\partial^5 u}{\partial x^5}\right)\right|_{x=\pm1} = \left. \frac{\partial^5 u}{\partial x^5}\right|_{x=\pm1} &= 0.
	\label{eq:elastic_film_BC3}
\end{align}\end{subequations}
Note that the flux conditions~\eqref{eq:elastic_film_BC3} are simplified by taking the clamping conditions~\eqref{eq:elastic_film_BC2} into account.

In the context of the PDE~\eqref{eq:elastic_film_linear} subject to the BCs~\eqref{eq:elastic_film_clamped_BC}, we consider the following sixth-order IBVP for $u=u(x,t)$:
\begin{subequations}
	\begin{empheq} [left = \empheqlbrace]{alignat=2}
		\frac{\partial u}{\partial t} - \mathcal{B} \frac{\partial^2 u}{\partial x^2} - \frac{\partial^6 u}{\partial x^6} &= f(x,t), \qquad &(x,t)\in(-1,1)\times\in(0,\infty), \label{eq:PDE_6}\\
		\left. u \right|_{x=\pm1} = \left.\frac{\partial u}{\partial x}\right|_{x=\pm1} = \left.\frac{\partial^5 u}{\partial x^5}\right|_{x=\pm1} &= 0,\quad &t\in(0,\infty), \label{eq:BC_6}\\
		u(x,0) &= u^0(x), \quad &x\in(-1,1),
	\end{empheq}\label{eq:IBVP_6}%
\end{subequations}
where $u^0(x)$ and $f(x,t)$ are given smooth functions.

To solve IBVP~\eqref{eq:IBVP_6}, we will employ a Petrov--Galerkin spectral method (section~\ref{sec:Galerkin}). To this end, we must construct suitable sets of adjoint eigenfunctions and determine their properties, which is the subject of the next section.

\section{The adjoint eigenvalue problems}\label{sec:EVP}

The eigenvalue problem (EVP) associated with IBVP~\eqref{eq:IBVP_6} reads:
\begin{subequations}
	\label{eq:6th_EVP}
	\begin{align}
		\label{eq:ODE_hg_6} 
		 -\frac{\rd^6 \psi}{\rd x^6} &= \lambda^6 \psi , 	\\
		\left.\psi \right|_{x=\pm1} = \left.\frac{\rd \psi}{\rd x}\right|_{x=\pm1} = \left.\frac{\rd^5 \psi}{\rd x^5}\right|_{x=\pm1} &=0 .
		\label{eq:6th_BCs}
	\end{align}
\end{subequations}	
Before we proceed with the derivation of the solutions (eigenfunctions) of  EVP~\eqref{eq:6th_EVP}, we first discuss why they possess certain properties required for the development of our Petrov--Galerkin spectral method in section~\ref{sec:Galerkin}.   

\subsection{Adjointness, biorthogonality, and expansions}\label{sub:Deriv_Biorth_Expand}

The required results are obtained by directly applying the general theory on $n$-th-order non-self-adjoint eigenvalue problems found in the comprehensive work of Birkhoff~\cite{Birkhoff1908}, but also in the classical monograph of Coddington and Levinson~\cite{CodLev}. We therefore facilitate this discussion for the reader by quoting or paraphrasing the relevant definitions and theorems from~\cite{Birkhoff1908} or~\cite{CodLev} accordingly. We adopt the same notation as~\cite{Birkhoff1908}. 

\begin{definition}{[Adjoint of a linear differential operator of order $n$ (from \cite{Birkhoff1908})]}
	 Consider the linear differential operator 
	 \begin{equation}
	 	\mathscr{L}_n(\,\cdot\,) := \frac{\rd^n}{\rd x^n}(\,\cdot\,) + p_1(x)\frac{\rd^{n-1}}{\rd x^{n-1}}(\,\cdot\,) + \cdots + (p_n(x)\,\cdot\,) ,
	 	\label{eq:n-th_order_operator}
	 \end{equation}	
	 where $p_1, \ldots, p_n$ are  real-valued functions of $x$ with continuous derivatives of all orders on a closed interval $[a,b]\subset\mathbb{R}$. 
	 Then, the adjoint of $\mathscr{L}_n $ is another linear operator, denoted $\mathscr{M}_n$, also of order $n$ given by
	 \begin{equation}
			\mathscr{M}_n(\,\cdot\,) := (-1)^n\frac{\rd^n}{\rd x^n}(\,\cdot\,) + (-1)^{n-1}\frac{\rd^{n-1}}{\rd x^{n-1}}(p_1(x)\,\cdot\,) + \cdots + (p_n(x)\,\cdot\,) .
		\label{eq:adjoint_n-th_order_operator}	
      \end{equation}
\end{definition}	

Throughout this work we will use the notation $\langle \cdot,\cdot \rangle$ for the  $L^2$ inner product on the interval $[a,b]$, that is, for any $\phi, \psi\in L^2[a,b]$, 
	\begin{equation}
		\langle \phi, \psi \rangle := \int_{a}^{b} \phi(x) \psi(x) \,\dif x .
	 \end{equation}
	 Also, $\|\cdot\|$ will denote the associated $L^2$ norm unless explicitly specified otherwise. 
	
\begin{definition}{[Adjoint of an $n$-th order eigenvalue problem  (from \cite{Birkhoff1908})]}
Let $\mathscr{L}_n$ be the operator given by \eqref{eq:n-th_order_operator}. The linear ordinary differential equation (ODE)
\begin{subequations}
	\label{eq:nth_EVP}
	\begin{equation}
		\label{eq:nth_ODE}
		\mathscr{L}_n(\psi) + \lambda \psi = 0 ,
	\end{equation}	
	subject to the linear homogeneous conditions in $\psi(a)$, $\psi'(a)$, $\ldots$, $\psi^{n-1}(a)$, $\psi(b)$, $\psi'(b)$, $\ldots$, $\psi^{n-1}(b)$,
	 \begin{equation}
	 	\label{eq:nth_BCs}
	 	\mathscr{U}_p(\psi) = 0 ,  \qquad (p=1,2, \ldots,n)
	 \end{equation}	
	forms an \emph{eigenvalue value problem} (EVP) for $\psi(x)$. 
\end{subequations}

The \emph{adjoint} EVP for $\phi(x)$ associated with EVP~\eqref{eq:nth_EVP} is formed by the ODE  
\begin{subequations}
	\label{eq:adjoint_nth_EVP}
	\begin{equation}
		\label{eq:nth_ODE_adjoint}
		\mathscr{M}_n(\phi) + \lambda \phi = 0 ,
	\end{equation}	
where $\mathscr{M}$ is given by \eqref{eq:adjoint_n-th_order_operator} and supplemented by $n$ adjoint boundary conditions in $\phi(a)$, $\phi'(a)$, $\ldots$, $\phi^{n-1}(a)$, $\phi(b)$, $\phi'(b)$, $\ldots$, $\phi^{n-1}(b)$,
	\begin{equation}
		\label{eq:nth_BCs_adjoint}
		\mathscr{V}_p(\phi) = 0 , \qquad (p=1,2, \ldots,n) .
	\end{equation}	
\end{subequations}
\end{definition}		

We do not discuss the form of the boundary conditions \eqref{eq:nth_BCs_adjoint} for the general $n$-th-order adjoint problem from \cite{Birkhoff1908}. Instead, later on in this section, we provide the (adjoint) BCs that are required to form the adjoint of the sixth-order EVP~\eqref{eq:6th_EVP}.  

We proceed with the following propositions (for proofs, see \cite{Birkhoff1908, CodLev}):
	
\begin{proposition}{[Common eigenvalues (from \cite{Birkhoff1908})]}
	\label{thm:prop_n-th_order_eigen}
	If a solution $\psi(x)$ of \eqref{eq:nth_EVP} exists for some eigenvalue $\lambda$, then a solution $\phi(x)$ of \eqref{eq:adjoint_nth_EVP} exists for the \emph{same eigenvalue $\lambda$}. Moreover, if $\psi(x)$ is unique (up to a constant multiplicative factor), then $\phi(x)$ is also unique (up to a constant multiplicative factor).
\end{proposition}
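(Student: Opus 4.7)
The plan is to establish Proposition~\ref{thm:prop_n-th_order_eigen} by building the characteristic determinants of both EVPs and showing, via Lagrange's identity, that they share the same zeros with the same multiplicities. This forces the two problems to have identical spectra and matched geometric multiplicities, so both assertions drop out together.

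First, I would invoke Lagrange's identity (Green's formula) for the pair $\mathscr{L}_n$, $\mathscr{M}_n$:
\begin{equation}
\int_a^b \phi\,\mathscr{L}_n(\psi)\,\rd x - \int_a^b \psi\,\mathscr{M}_n(\phi)\,\rd x = \bigl[\phi,\psi\bigr](b) - \bigl[\phi,\psi\bigr](a),
\end{equation}
where the bilinear concomitant $[\phi,\psi](x)$ is an explicit expression in the $2n$ boundary values $\psi,\psi',\ldots,\psi^{(n-1)}$ and $\phi,\phi',\ldots,\phi^{(n-1)}$ at $x$, obtained by $n$ successive integrations by parts. It is precisely this concomitant that dictates the definition of the adjoint boundary forms $\mathscr{V}_p$: they are constructed (as in \cite{Birkhoff1908,CodLev}) so that, jointly with $\mathscr{U}_p$, the $2n$ linear functionals render the boundary term $[\phi,\psi]_a^b$ identically zero on every pair $(\psi,\phi)$ satisfying $\mathscr{U}_p(\psi)=0$ and $\mathscr{V}_p(\phi)=0$.

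Second, for fixed $\lambda\in\mathbb{C}$, let $\{\psi_k(\,\cdot\,;\lambda)\}_{k=1}^n$ and $\{\phi_k(\,\cdot\,;\lambda)\}_{k=1}^n$ be fundamental systems of solutions of $\mathscr{L}_n(\psi)+\lambda\psi=0$ and $\mathscr{M}_n(\phi)+\lambda\phi=0$ on $[a,b]$, respectively. Writing $\psi=\sum_{k=1}^n c_k\psi_k$ and imposing $\mathscr{U}_p(\psi)=0$ produces a homogeneous $n\times n$ system for $(c_1,\ldots,c_n)$ with coefficient matrix $A(\lambda):=(\mathscr{U}_p(\psi_k))_{p,k}$; its determinant $\Delta(\lambda)$ is the characteristic function of EVP~\eqref{eq:nth_EVP}, and $\lambda$ is an eigenvalue iff $\Delta(\lambda)=0$. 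Analogously, the adjoint EVP~\eqref{eq:adjoint_nth_EVP} has characteristic function $\Delta^{\ast}(\lambda)=\det A^{\ast}(\lambda)$ with $A^{\ast}(\lambda):=(\mathscr{V}_p(\phi_k))_{p,k}$.

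Third, I would use the annihilation property of the concomitant to relate $A$ and $A^{\ast}$. Evaluating $[\phi_k,\psi_j](b)-[\phi_k,\psi_j](a)$ produces an $n\times n$ matrix $C(\lambda)$ which is invertible because $\{\psi_j\}$ and $\{\phi_k\}$ are fundamental systems. A standard block-matrix manipulation (carried out explicitly in \cite{Birkhoff1908} and \cite{CodLev}) identifies $A^{\ast}(\lambda)$, up to multiplication by the invertible $C(\lambda)$, with a rearrangement of the transpose of $A(\lambda)$. This yields $\Delta^{\ast}(\lambda)=c(\lambda)\,\Delta(\lambda)$ with $c(\lambda)$ nowhere vanishing, so the two characteristic functions have exactly the same zeros with the same orders, proving the first claim. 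For the uniqueness statement, the same identification forces $\mathrm{rank}\,A^{\ast}(\lambda)=\mathrm{rank}\,A(\lambda)$; hence if $\dim\ker A(\lambda)=1$ (i.e.\ $\psi$ is unique up to a scalar), then $\dim\ker A^{\ast}(\lambda)=1$ as well, so $\phi$ is likewise unique up to a multiplicative constant. The main obstacle, I expect, is the bookkeeping in step three: setting up the concomitant matrix $C(\lambda)$ and verifying the determinantal and rank identifications cleanly, particularly at eigenvalues where the geometric multiplicity exceeds one. Once that algebraic identification is secured, the rest follows immediately from Lagrange's identity combined with the rank--nullity theorem.
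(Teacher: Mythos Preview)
The paper does not supply its own proof of this proposition: it explicitly quotes the result from \cite{Birkhoff1908} and \cite{CodLev} and refers the reader there for the argument. Your outline---Lagrange's identity for the pair $(\mathscr{L}_n,\mathscr{M}_n)$, construction of the characteristic determinants $\Delta(\lambda)$ and $\Delta^{\ast}(\lambda)$ from fundamental systems, and the concomitant-matrix identification yielding $\Delta^{\ast}(\lambda)=c(\lambda)\Delta(\lambda)$ with nonvanishing $c$ and hence equal ranks---is precisely the classical route taken in those references, so there is nothing to contrast.
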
	

\begin{proposition}{[Biorthogonality of the eigenfunctions of adjoint problems (from  \cite{Birkhoff1908, CodLev})]}
	\label{thm:prop_n-th_order_biorthog}
	  Consider the (common) eigenvalues 
	  \begin{equation*}
	  	    \lambda_1, \lambda_2, \ldots,
	  \end{equation*}
	  of EVPs~\eqref{eq:nth_EVP} and \eqref{eq:adjoint_nth_EVP}. Then their corresponding eigenfunctions $\big\{\psi_i\big\}_{i=1}^{\infty}$ of \eqref{eq:nth_EVP} and $\big\{\phi_i\big\}_{i=1}^{\infty}$ of \eqref{eq:adjoint_nth_EVP}
	  are \emph{biorthogonal}, namely
	   \begin{equation*}
	  	   	\langle \psi_i, \phi_j \rangle = \begin{cases}
	  		  0 & \text{if}\quad  i\neq j  \\
	  		c_{i}\neq 0 & \text{if}\quad  i= j
	  	\end{cases} 
	  	\,,
	  \end{equation*}
	  where the $c_i$ are constants.
\end{proposition}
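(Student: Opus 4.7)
The approach I would take is the classical one via Green's formula (the Lagrange identity) for the pair $(\mathscr{L}_n, \mathscr{M}_n)$. Integrating by parts $n$ times on $\int_a^b \phi\,\mathscr{L}_n(\psi)\,\rd x$ shifts all derivatives from $\psi$ onto $\phi$ and, by the very definition of $\mathscr{M}_n$ in~\eqref{eq:adjoint_n-th_order_operator}, produces
\begin{equation*}
    \int_a^b \phi\,\mathscr{L}_n(\psi)\,\rd x - \int_a^b \psi\,\mathscr{M}_n(\phi)\,\rd x = \bigl[P(\psi,\phi)\bigr]_{a}^{b},
\end{equation*}
where $P(\psi,\phi)$ is the bilinear concomitant depending on the boundary values of $\psi, \psi', \ldots, \psi^{(n-1)}$ and $\phi, \phi', \ldots, \phi^{(n-1)}$. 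The adjoint boundary conditions $\mathscr{V}_p(\phi) = 0$ alluded to in~\eqref{eq:nth_BCs_adjoint} (and made precise in~\cite{Birkhoff1908}) are engineered so that the right-hand side vanishes whenever $\mathscr{U}_p(\psi) = 0$ and $\mathscr{V}_p(\phi) = 0$ hold simultaneously for all $p = 1, \ldots, n$.

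With Green's formula in hand, the orthogonality for $i \neq j$ is immediate. Take $\psi_i$ satisfying \eqref{eq:nth_EVP} with eigenvalue $\lambda_i$ and $\phi_j$ satisfying \eqref{eq:adjoint_nth_EVP} with eigenvalue $\lambda_j$. Substituting $\mathscr{L}_n(\psi_i) = -\lambda_i \psi_i$ and $\mathscr{M}_n(\phi_j) = -\lambda_j \phi_j$ into the identity above gives
\begin{equation*}
    (\lambda_j - \lambda_i)\,\langle \psi_i, \phi_j \rangle = 0,
\end{equation*}
so that $\langle \psi_i, \phi_j \rangle = 0$ whenever the (necessarily distinct) eigenvalues satisfy $\lambda_i \neq \lambda_j$.

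The main obstacle is the nondegeneracy statement $c_i = \langle \psi_i, \phi_i \rangle \neq 0$, on which Green's formula provides no information. I would argue by contradiction: if $\langle \psi_i, \phi_i \rangle = 0$, then combined with the cross-orthogonality just established, $\phi_i$ would be orthogonal to every member of the family $\{\psi_j\}_{j=1}^{\infty}$. Invoking the completeness theorem of Birkhoff for regular $n$-th-order eigenvalue problems \cite{Birkhoff1908,CodLev}, which asserts that $\{\psi_j\}$ is complete in $L^2[a,b]$, this would force $\phi_i \equiv 0$, contradicting Proposition~\ref{thm:prop_n-th_order_eigen}. Alternatively, for the concrete sixth-order EVP~\eqref{eq:6th_EVP} treated in this paper, one can verify $c_i \neq 0$ by direct computation once the explicit eigenfunctions $\psi_i$ and $\phi_i$ are available, bypassing the abstract completeness argument at the cost of an explicit case analysis.
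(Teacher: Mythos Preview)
The paper does not supply a proof of this proposition at all: it states explicitly ``for proofs, see \cite{Birkhoff1908, CodLev}'' and moves on. So there is no in-paper argument to compare against, and your sketch is a reasonable reconstruction of the classical proof.

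Your orthogonality step (Green's formula / Lagrange identity, then $(\lambda_j-\lambda_i)\langle\psi_i,\phi_j\rangle=0$) is exactly the standard route and is correct. The one point deserving care is your nondegeneracy argument for $c_i=\langle\psi_i,\phi_i\rangle\neq0$: you invoke completeness of $\{\psi_j\}$, but in the paper's logical ordering completeness is Proposition~\ref{thm:Exp_n-th_order}, stated \emph{after} the present proposition, and its expansion formula~\eqref{eq:gen_expand_Birkhoff} already presupposes $c_i\neq0$. This is not a fatal circularity --- in Birkhoff and in Coddington--Levinson the contour-integral/residue proof of the expansion theorem establishes completeness and the nonvanishing of $c_i$ simultaneously (the simplicity of the eigenvalue forces a simple pole of the Green's function, and the residue computation yields $c_i\neq0$ as a byproduct) --- but as written your argument forward-references a result whose usual statement depends on what you are proving. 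It would be cleaner either to cite the Green's-function/residue mechanism directly for $c_i\neq0$, or to rely on your second alternative. That alternative --- verifying $c_i\neq0$ by explicit computation for the concrete sixth-order problem --- is precisely what the paper does later in Proposition~\ref{thm:prop_6-th_order_biorthog} via the closed-form constants~\eqref{eq:biorth_const}.
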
	

Finally, we can now state the expansion theorem (without proof): 	
\begin{proposition}{[Expansion theorem (paraphrasing \cite{Birkhoff1908} and \cite{CodLev})]}
	\label{thm:Exp_n-th_order}
	Let $f(x)$ be a continuous real-valued function on $[a,b]$, with a continuous first-order derivative and $\big\{\psi_i\big\}_{i=1}^{\infty}$, $\big\{\phi_i\big\}_{i=1}^{\infty}$ be as in proposition~\ref{thm:prop_n-th_order_biorthog}. Then, the series  
	\begin{equation}
		\label{eq:gen_expand_Birkhoff}
		\sum_{i=1}^{\infty} \frac{\D \langle f, \phi_i\rangle}{\D \langle \psi_i, \phi_i\rangle}\cdot \psi_i(x) ,
	\end{equation}	
	converges to $f(x)$, with the possible exception of $x=a$ and $x=b$.  
\end{proposition}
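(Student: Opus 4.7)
The plan is to use the classical resolvent/contour-integral method of Birkhoff~\cite{Birkhoff1908}. First, I would introduce the Green's function $G(x,s;\lambda)$ of the non-homogeneous problem $\mathscr{L}_n\psi + \lambda\psi = g(x)$ on $[a,b]$ subject to $\mathscr{U}_p(\psi)=0$, $p=1,\ldots,n$. For $\lambda\notin\{\lambda_i\}_{i=1}^\infty$ this problem has a unique solution $\psi(x)=\int_a^b G(x,s;\lambda) g(s)\,\rd s$, and $G$ is meromorphic in $\lambda$ with simple poles exactly at the eigenvalues $\lambda_i$. Writing $G$ in terms of the fundamental system for $\mathscr{L}_n+\lambda I$ and invoking Propositions~\ref{thm:prop_n-th_order_eigen} and~\ref{thm:prop_n-th_order_biorthog}, a short residue calculation yields
\begin{equation*}
	\mathop{\mathrm{Res}}_{\lambda=\lambda_i} G(x,s;\lambda) \;=\; \frac{\psi_i(x)\,\phi_i(s)}{\langle\psi_i,\phi_i\rangle}.
\end{equation*}

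Next, I would integrate the resolvent $\int_a^b G(x,s;\lambda)f(s)\,\rd s$ in $\lambda$ around a closed contour $\Gamma_R$ in the complex $\lambda$-plane that encloses exactly the first $N(R)$ eigenvalues while avoiding each $\lambda_i$. The residue theorem then gives
\begin{equation*}
	\frac{1}{2\pi\mi}\oint_{\Gamma_R}\int_a^b G(x,s;\lambda) f(s)\,\rd s\,\rd\lambda \;=\; \sum_{i=1}^{N(R)} \frac{\langle f,\phi_i\rangle}{\langle\psi_i,\phi_i\rangle}\,\psi_i(x),
\end{equation*}
which is exactly the $N(R)$-th partial sum of the series~\eqref{eq:gen_expand_Birkhoff}. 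The proposition therefore reduces to showing that, for every $x\in(a,b)$, the contour integral on the left converges to $f(x)$ as $R\to\infty$ along a suitable sequence of radii avoiding the eigenvalues.

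The hard part---and the heart of Birkhoff's argument---is establishing sharp asymptotic bounds on $G(x,s;\lambda)$ as $|\lambda|\to\infty$. To obtain them, one constructs $n$ linearly independent solutions of $\mathscr{L}_n\psi+\lambda\psi=0$ via a Liouville--Green (WKB-type) expansion in each Stokes sector of the $\lambda$-plane, organized by the sign of $\operatorname{Re}(\omega_k \lambda^{1/n})$ with $\omega_k$ the $n$-th roots of $-1$, and then inverts the determinantal system imposed by the BCs $\mathscr{U}_p$. The crucial hypothesis at this stage is that the boundary operators be \emph{regular} in Birkhoff's sense, which guarantees that the characteristic determinant grows at a definite exponential rate on every ray outside an at-most-countable set of exceptional directions. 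This regularity then furnishes uniform decay of the form $G(x,s;\lambda)=\mathcal{O}(|\lambda|^{-1/n})$ on suitably chosen circles $|\lambda|=R_k$. Given such bounds, a single integration by parts in $s$ (legitimate because $f\in C^1[a,b]$) reduces the asymptotics of the inner integral to those of a Dirichlet-type kernel, and pointwise convergence to $f(x)$ on every compact subset of $(a,b)$ then follows by the classical Dirichlet--Jordan criterion. The exclusion of the endpoints $x=a,b$ is intrinsic: every $\psi_i$ satisfies the homogeneous BCs $\mathscr{U}_p(\psi_i)=0$, whereas $f$ need not, so the partial sums cannot generally match $f$ there.
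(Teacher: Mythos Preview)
The paper does not actually prove this proposition: it is explicitly introduced with ``we can now state the expansion theorem (without proof)'' and the result is simply quoted from Birkhoff~\cite{Birkhoff1908} and Coddington--Levinson~\cite{CodLev}. There is therefore no in-paper argument to compare against; your sketch is, in effect, an outline of the very argument the paper defers to.

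As an outline of the Birkhoff resolvent method, your proposal is sound and hits the right milestones: meromorphy of the Green's function with residues $\psi_i(x)\phi_i(s)/\langle\psi_i,\phi_i\rangle$, contour integration to recover partial sums, WKB asymptotics of a fundamental system in Stokes sectors, Birkhoff regularity of the boundary forms to control the characteristic determinant, and the $C^1$ integration-by-parts that reduces the remainder to a Dirichlet-kernel estimate. Two caveats are worth flagging. First, the residue formula as written presupposes simple eigenvalues; in the general setting one must either assume this or pass to the associated (generalized) eigenfunctions and principal parts, as in \cite{CodLev}. Second, Birkhoff regularity of the $\mathscr{U}_p$ is a genuine hypothesis, not a consequence of the setup; the proposition as stated in the paper implicitly inherits this hypothesis from the cited sources, and your sketch correctly identifies it as the crux, but it is not something you can derive within the argument.
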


\subsection{The adjoint sixth-order problem and its boundary conditions}\label{}

Now, we return our focus to the sixth-order EVP~\eqref{eq:6th_EVP} and proceed with finding its adjoint problem. 
\begin{proposition}	
	\label{thm:6th_EVP_adj}
	The EVP 
	\begin{subequations}
		\label{eq:6th_EVP_adj}
		\begin{align}
			\label{eq:ODE_hg_6_adj} 
			-\frac{\rd^6 \phi}{\rd x^6} &= \lambda^6 \phi , 	\\
			\left.\frac{\rd \phi}{\rd x}\right|_{x=\pm1} = \left.\frac{\rd^2 \phi}{\rd x^2}\right|_{x=\pm1} = \left.\frac{\rd^3 \phi}{\rd x^3}\right|_{x=\pm1} &=0 ,
			\label{eq:6th_BCs_adj}
		\end{align}
		is the adjoint problem associated with EVP~\eqref{eq:6th_EVP}.	
	\end{subequations}	
\end{proposition}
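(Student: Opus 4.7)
The plan is to verify proposition~\ref{thm:6th_EVP_adj} in two stages: first, show that the formal adjoint of the differential operator yields the same ODE~\eqref{eq:ODE_hg_6_adj} as in the original problem, and second, derive the six adjoint boundary conditions~\eqref{eq:6th_BCs_adj} by requiring that the bilinear concomitant (i.e., the boundary terms produced by six integrations by parts) vanish for all $\psi$ satisfying~\eqref{eq:6th_BCs}.

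For the operator, I would rewrite~\eqref{eq:ODE_hg_6} in the normalized form of \eqref{eq:n-th_order_operator} so that $\mathscr{L}_6 = \rd^6/\rd x^6$ with $p_1=\cdots=p_6 = 0$ (with spectral parameter accordingly absorbed). Applying the formula~\eqref{eq:adjoint_n-th_order_operator}, we immediately get $\mathscr{M}_6 = (-1)^6\rd^6/\rd x^6 = \rd^6/\rd x^6$, i.e., the sixth-order derivative operator is formally self-adjoint. Hence the adjoint ODE is identical in form to~\eqref{eq:ODE_hg_6}, which establishes~\eqref{eq:ODE_hg_6_adj}. The only remaining content is the choice of boundary conditions on $\phi$.

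For the boundary conditions, I would compute $\langle \mathscr{L}_6\psi,\phi\rangle - \langle \psi,\mathscr{M}_6\phi\rangle$ on $[-1,1]$ via successive integration by parts, obtaining the standard Lagrange identity
\begin{equation*}
\int_{-1}^{1}\!\bigl(\psi^{(6)}\phi - \psi\phi^{(6)}\bigr)\,\rd x = \Bigl[\psi^{(5)}\phi - \psi^{(4)}\phi' + \psi^{(3)}\phi'' - \psi''\phi''' + \psi'\phi^{(4)} - \psi\phi^{(5)}\Bigr]_{-1}^{1}.
\end{equation*}
By the Birkhoff prescription, the adjoint BCs~\eqref{eq:nth_BCs_adjoint} are precisely those that make this boundary bilinear form vanish whenever $\psi$ satisfies~\eqref{eq:6th_BCs}. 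Imposing $\psi = \psi' = \psi^{(5)} = 0$ at $x = \pm 1$ kills the first, fifth, and sixth boundary terms automatically, leaving
\begin{equation*}
\Bigl[-\psi^{(4)}\phi' + \psi^{(3)}\phi'' - \psi''\phi'''\Bigr]_{-1}^{1}.
\end{equation*}
Since $\psi^{(2)}, \psi^{(3)}, \psi^{(4)}$ are unconstrained at $x=\pm 1$ and can be chosen independently (a fact one can justify by exhibiting explicit admissible $\psi$ that isolate each coefficient), each of the three multipliers of $\phi$ must vanish at both endpoints. This forces $\phi'|_{x=\pm 1} = \phi''|_{x=\pm 1} = \phi'''|_{x=\pm 1} = 0$, which is~\eqref{eq:6th_BCs_adj}.

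The main subtlety I anticipate is not the integration by parts itself (which is routine) but the justification that the resulting six conditions on $\phi$ are exactly the adjoint conditions, rather than merely sufficient conditions for boundary terms to vanish. By Birkhoff's theory, a well-posed $n$th-order EVP with $n$ boundary conditions on $\psi$ admits an adjoint with exactly $n$ boundary conditions on $\phi$, and these are uniquely determined (up to invertible linear combinations) by the requirement that the bilinear concomitant vanish on the product of admissible spaces. A short linear-algebra check — confirming that our three conditions on $\psi$ at each endpoint together with the three derived conditions on $\phi$ exhaust the six degrees of freedom at each endpoint — completes the argument and shows that~\eqref{eq:6th_EVP_adj} is indeed the adjoint of~\eqref{eq:6th_EVP}.
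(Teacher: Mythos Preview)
Your proposal is correct and follows essentially the same route as the paper: establish that the sixth-derivative operator is formally self-adjoint, compute the bilinear concomitant via six integrations by parts, impose the original boundary conditions on $\psi$ to reduce to the three surviving terms $-\psi^{(4)}\phi'+\psi^{(3)}\phi''-\psi''\phi'''$ at $x=\pm1$, and conclude that $\phi',\phi'',\phi'''$ must vanish there. The only cosmetic difference is that the paper packages the uniqueness argument through Birkhoff's $\mathscr{U}_p$/$\mathscr{V}_p$ formalism (writing $P(\psi,\phi)=\sum_p\mathscr{U}_p(\psi)\mathscr{V}_{2n+1-p}(\phi)$), whereas you argue directly that the unconstrained values $\psi^{(2)},\psi^{(3)},\psi^{(4)}$ at the endpoints force each coefficient to vanish; these are equivalent.
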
	

\begin{proof}
	Setting $n=6$ in equations~\eqref{eq:n-th_order_operator} and \eqref{eq:adjoint_n-th_order_operator} leads to the observation that the differential operator $\mathscr{L}= -{\rd^6}/{\rd x^6}$ is symmetric, that is, $\mathscr{M} = \mathscr{L}$. Therefore, to find the adjoint problem associated with EVP~\eqref{eq:6th_EVP}, we need only find the adjoint boundary conditions.
	
	Let $\psi(x)$ be a solution of EVP~\eqref{eq:6th_EVP} corresponding to some eigenvalue $\lambda$ and let $\phi(x)\in\mathcal{C}^6[-1,1]$ be the solution of ODE~\eqref{eq:ODE_hg_6_adj}  corresponding to the \emph{same eigenvalue $\lambda$} and satisfying some homogeneous boundary conditions of the form $\mathscr{V}_p(\phi)=0, \ \ p=1,\ldots,6$, which are to be determined.  The $L^2[-1,1]$ inner product between equation~\eqref{eq:ODE_hg_6} and $\phi$ is:
	\begin{equation}
		\int_{-1}^{+1} -\frac{\rd^6 \psi}{\rd x^6} \phi \,\rd x = \lambda^6 \int_{-1}^{+1} \psi\phi \,\rd x.
	\end{equation}
By repeated integration by parts, we obtain
\begin{subequations}
	\begin{equation}
		\langle \mathscr{L}\psi,\phi \rangle = \langle \psi,\mathscr{M}\phi \rangle + P(\psi,\phi)\Big|_{x=-1}^{x=+1}\,,
		\label{eq:Lpp_6}
	\end{equation}
	where, following Birkhoff \cite{Birkhoff1908}, $P(\psi,\phi)\Big|_{x=-1}^{x=+1}$ is a bilinear form of the values of $\phi$ and $\psi$ (and their derivatives) at $x=\pm1$, specifically:
	\begin{equation}
		P(\psi,\phi)\Big|_{x=-1}^{x=+1} := \left.\left[
		\frac{\rd^5\psi}{\rd x^5}\phi
		+ \frac{\rd \psi}{\rd x}\frac{\rd^4\phi}{\rd x^4}
		- \psi\frac{\rd^5\phi}{\rd x^5}
		- \frac{\rd^4 \psi}{\rd x^4}\frac{\rd\phi}{\rd x}
		+ \frac{\rd^3 \psi}{\rd x^3}\frac{\rd^2\phi}{\rd x^2}
		- \frac{\rd^2 \psi}{\rd x^2}\frac{\rd^3\phi}{\rd x^3}
		\right]\right|_{x=-1}^{x=+1} .
		\label{eq:Jpp_6}
	\end{equation}
\end{subequations}
	According to Birkhoff \cite{Birkhoff1908}, we can also write
	\begin{equation}
		P(\psi,\phi)\Big|_{x=-1}^{x=+1} = \sum_{p=1}^{12}\mathscr{U}_p(\psi)\mathscr{V}_{2n+1-p}(\phi),
		\label{eq:Jpp_6_2}
	\end{equation}
	where $\mathscr{U}$ and $\mathscr{V}$ were defined in \eqref{eq:nth_BCs} and \eqref{eq:nth_BCs_adjoint}, respectively. 
	The BCs~\eqref{eq:6th_BCs} inform us that $\mathscr{U}_{1,2}(\psi) = \psi^{(5)}(\pm1) = 0$, $\mathscr{U}_{3,4}(\psi) = \psi'(\pm1) = 0$, $\mathscr{U}_{5,6}(\psi) = \psi(\pm1) = 0$, whence for \emph{any} choice of $\mathscr{V}_{12,11}(\psi)$, $\mathscr{V}_{10,9}(\psi)$, $\mathscr{V}_{8,7}(\psi)$, \eqref{eq:Jpp_6} reduces to	
	\begin{equation}
		P(\psi,\phi)\Big|_{x=-1}^{x=+1} = \left.\left[
		- \frac{\rd^4 \psi}{\rd x^4}\frac{\rd\phi}{\rd x}
		+ \frac{\rd^3 \psi}{\rd x^3}\frac{\rd^2\phi}{\rd x^2}
		- \frac{\rd^2 \psi}{\rd x^2}\frac{\rd^3\phi}{\rd x^3}
		\right]\right|_{x=-1}^{x=+1} .
		\label{eq:Jpp_6_simplified}
	\end{equation}
	
	Choosing $\mathscr{V}_{6,5}(\phi) = \phi'(\pm1) = 0$, $\mathscr{V}_{4,3}(\phi) = \phi''(\pm1) = 0$, $\mathscr{V}_{2,1}(\phi) = \phi{'''}(\pm1) = 0$, makes $P(\psi,\phi)\Big|_{x=-1}^{x=+1}$ vanish for \emph{any} choice of $\mathscr{U}_{7,8}(\phi)$, $\mathscr{U}_{9,10}(\phi)$, $\mathscr{U}_{11,12}(\phi)$. These conditions are precisely the BCs~\eqref{eq:6th_BCs_adj} on $\phi$ we posited at the start.	Hence,  the adjointness condition
	\begin{equation}
		\langle \mathscr{L}\psi,\phi \rangle = \langle \psi,\mathscr{M}\phi \rangle,
		\label{eq:adj_cond_6}
	\end{equation}
	holds for the solutions pairs $\psi(x)$ of EVP~\eqref{eq:6th_EVP} and $\phi(x)$ of EVP~\eqref{thm:6th_EVP_adj} corresponding to any given common eigenvalue $\lambda$ thereof.  
	\end{proof}

\subsection{Constructing the eigenfunctions}

First, we solve EVP~\eqref{eq:6th_EVP} and derive our countable set of solutions. The characteristic equation corresponding to \eqref{eq:ODE_hg_6} has roots
\begin{equation}
	\left\{ +\lambda \ri, -\lambda \ri, \left(\tfrac{\ri}{2} + \tfrac{\sqrt{3}}{2}\right)\lambda, \left(\tfrac{\ri}{2} - \tfrac{\sqrt{3}}{2}\right)\lambda, \left(-\tfrac{\ri}{2} + \tfrac{\sqrt{3}}{2}\right)\lambda, \left(-\tfrac{\ri}{2} - \tfrac{\sqrt{3}}{2}\right)\lambda \right\}.
\end{equation} 
Therefore, the general solution of the homogeneous version of \eqref{eq:ODE_hg_6} can be written in the form
\begin{multline}
	\label{eq:gensol_ODE_hg_6} 
	\psi(x) = \overbrace{c_1 \sin(\lambda x) 
		+ c_5 \sin\left(\tfrac{1}{2} \lambda x\right) \cosh \left(\tfrac{\sqrt{3}}{2} \lambda x\right) 
		+ c_6 \cos\left(\tfrac{1}{2} \lambda x\right) \sinh \left(\tfrac{\sqrt{3}}{2} \lambda x\right)}^{\psi^s(x)\;\;\text{(odd)}}\\
	+ \underbrace{c_2 \cos(\lambda x)
		+ c_3 \sin\left(\tfrac{1}{2} \lambda x\right) \sinh \left(\tfrac{\sqrt{3}}{2} \lambda x\right)
		+ c_4 \cos\left(\tfrac{1}{2} \lambda x\right) \cosh \left(\tfrac{\sqrt{3}}{2} \lambda x\right)}_{\psi^c(x)\;\;\text{(even)}} .
\end{multline}
We choose to express the solution in this way, following the example of the so-called ``beam'' functions \cite{Chandrasek,Chri_Annuary_Chandra,PCB_IJNMF}, as well as the notation followed in~\cite{NectarIvan2023,NectarIvan2024,NectarIvan2025} in order to have odd and even sets of eigenfunctions resembling trigonometric sines and cosines.  

Imposing BCs~\eqref{eq:6th_BCs} on each of the $\psi^s(x)$ and $\psi^c(x)$ separately yields homogeneous linear systems for $c_1,c_5,c_6$ and $c_2,c_3,c_4$, respectively. To find nontrivial solutions, we set the determinants of the coefficient matrices of these linear systems equal to zero and arrive at the eigenvalue relations:
\begin{subequations}\label{eq:evals_sixth_015}\begin{alignat}{3}
		&\text{even}:\qquad & \cos{(2 \lambda^c)} + 2\cos{ \lambda^c} \cosh{(\sqrt{3} \lambda^c)} - 3  &= 0,  \label{eq:evals_even_0_1_5}\\
		&\text{odd}:\qquad & \sin{\lambda^s} \left(\cos{\lambda^s} - \cosh{(\sqrt{3} \lambda^s)}\right)  &= 0.  \label{eq:evals_odd_0_1_5}
\end{alignat}\end{subequations}

Each solution $\lambda_m^c$ of  \eqref{eq:evals_even_0_1_5} is an eigenvalue corresponding to an even eigenfunction $\psi^c_m$ of EVP~\eqref{eq:6th_EVP}, whereas each solution $\lambda_m^s$ of  \eqref{eq:evals_odd_0_1_5} corresponds to an odd eigenfunction $\psi^s_m$. The solutions of the eigenvalue relation \eqref{eq:evals_odd_0_1_5} are the positive roots of $\sin{\lambda_m^s}=0$, therefore the ``odd'' eigenvalues are given by $\lambda_m^s = m\pi$ with $m\in\mathbb{N}$.

\begin{table}[h!]
	\caption{\label{tab:eigenvalues} The ``even''  eigenvalues $\lambda_m^c$, found by solving the transcendental equation~\eqref{eq:evals_even_0_1_5}.  The numerical solutions using \textsc{Mathematica}'s   \texttt{FindRoot} \cite{Mathematica} are listed alongside the corresponding values from the proposed asymptotic formula.}
	\centering 
	\begin{tabular}{l*{4}{d{12}}}    
		\toprule
		\multicolumn{1}{c}{$m$} & \multicolumn{1}{c}{$\lambda_m^c$} & \multicolumn{1}{c}{$(m+1/2)\pi$} \\
		\midrule
		\multicolumn{1}{l|}{0} & 0 & - \\
		\multicolumn{1}{l|}{1} & 4.71352778544 & 4.71238898038 \\
		\multicolumn{1}{l|}{2} & 7.85397668926 & 7.85398163397 \\
		\multicolumn{1}{l|}{3} & 10.9955743090 & 10.9955742876 \\
		\multicolumn{1}{l|}{4} & 14.1371669411 & 14.1371669412 \\
		\multicolumn{1}{l|}{5} & 17.2787595947 & 17.2787595947 \\
		\multicolumn{1}{l|}{6} & 20.4203522483 & 20.4203522483 \\
		\bottomrule
	\end{tabular}
\end{table}

On the other hand, eigenvalue relation \eqref{eq:evals_even_0_1_5} was solved numerically using a highly accurate numerical solver--\textsc{Mathematica}'s \texttt{FindRoot} \cite{Mathematica} with 16 digits of working precision. Utilizing~\eqref{eq:evals_even_0_1_5} we also derived large-$\lambda^c$ asymptotic formulas for the ``even'' eigenvalues, finding that  $\lambda_m^c \sim (m+1/2)\pi$ as $\lambda^c\to\infty$. The results are presented in table~\ref{tab:eigenvalues}. As we can see, the asymptotic formula is extremely accurate even for  $m=5$, with the formula for $\lambda_m^c$ agreeing with the results obtained with \texttt{FindRoot} for 12 digits. Thus, in practice, when implementing a Petrov--Galerkin spectral method, there will be no need to use numerical root-finding to determine $\lambda_m^c$ for $m\geq6$. At this point, we note that our calculations and derived asymptotic formulas demonstrate the fact that \emph{all the eigenvalues $\lambda_m^c,\ m\geq 0$ and $\lambda_m^s,\ m\geq 1$ of~\eqref{eq:6th_EVP} are distinct}.

What remains is to determine the constants $c_2$, $c_3$, and $c_4$ for the even eigenfunctions $\psi_m^c(x)$ and $c_1$, $c_5$ and $c_6$ for the odd eigenfunctions $\psi_m^s(x)$, which were introduced in \eqref{eq:gensol_ODE_hg_6}. These constants are found by imposing BCs~\eqref{eq:6th_BCs} and utilizing the eigenvalue relations~\eqref{eq:evals_sixth_015}. We do not normalize the eigenfunctions but instead select the multiplicative constant to be one. After a lengthy calculation, and with the aid of various \textsc{Mathematica}~\cite{Mathematica} algebraic and trigonometric manipulation routines, we arrive at the expressions: 
\begin{subequations}
	\label{eq:efuncs_6_015}
	\begin{align}
		\psi_m^c(x) &= 
	     \cos{\lambda_m^c x} \nonumber \\ 
	     &\phantom{=}+ \tfrac{\cos{\tfrac{\lambda_m^c}{2}} \cosh{\tfrac{\sqrt{3}\,\lambda_m^c}{2}}}{\sqrt{3}\,\sin{\lambda_m^c} - \sinh{(\sqrt{3}\,\lambda_m^c)}}  \Bigg[\bigg(-2\sqrt{3}\cos{\lambda_m^c} \tan{\tfrac{\lambda_m^c}{2}} - 2 (\cos{\lambda_m^c}-2) \tanh{\tfrac{\sqrt{3}\,\lambda_m^c}{2}}\bigg) \cos{\tfrac{\lambda_m^c}{2}  x} \cosh{\tfrac{\sqrt{3}\,\lambda_m^c}{2} x} \nonumber\\ 
		 &\phantom{=}+\bigg( -3 \tan{\tfrac{\lambda_m^c}{2}} + \sin^2{\tfrac{\lambda_m^c}{2}} \tan{\tfrac{\lambda_m^c}{2}} - \frac{3}{2}\sin {\lambda_m^c} + 2 \sqrt{3}\cos{\lambda_m^c} \tanh{\tfrac{\sqrt{3} \lambda_m^c}{2}}\bigg)\sin{\tfrac{\lambda_m^c}{2} x} \sinh{\tfrac{\sqrt{3}\, \lambda_m^c }{2}  x}\Bigg]   	 \,,\label{eq:efuncs_6_015_even} \\ 
		\psi_m^s(x) &= \sin{\lambda_m^s x} - \tfrac{4}{\sinh{(\sqrt{3}\,\lambda_m^s)}}  \Bigg[\bigg(\cos^3{\tfrac{\lambda_m^s}{2}}\sinh{\tfrac{\sqrt{3}\,\lambda_m^s}{2}}\bigg)  \sin{\tfrac{\lambda_m^s}{2} x} \cosh{\tfrac{\sqrt{3}\,\lambda_m^s}{2}  x} \nonumber\\ 
		&\phantom{=}+\bigg( \sin^3{\tfrac{\lambda_m^s}{2}}\cosh{\tfrac{\sqrt{3}\,\lambda_m^s}{2}}\bigg)\cos{\tfrac{\lambda_m^s}{2} x} \sinh{\tfrac{\sqrt{3}\, \lambda_m^s}{2}  x}\Bigg]\,, \label{eq:efuncs_6_015_odd} 
	\end{align}
	 for $m\in\mathbb{N}$\,.
\end{subequations}

\begin{figure}[ht]
	\centering
	\begin{subfigure}[b]{0.475\textwidth}
		\centering
		\includegraphics[width=\textwidth]{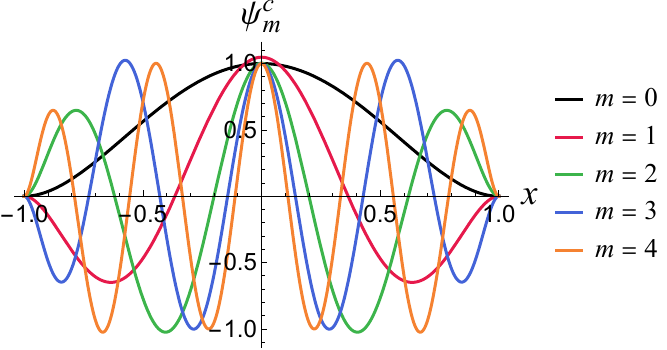}
		\caption{Even eigenfunctions.~\label{fig:efunc_015_even}}
	\end{subfigure}
	\hfill
	\begin{subfigure}[b]{0.475\textwidth}
		\centering
		\includegraphics[width=\textwidth]{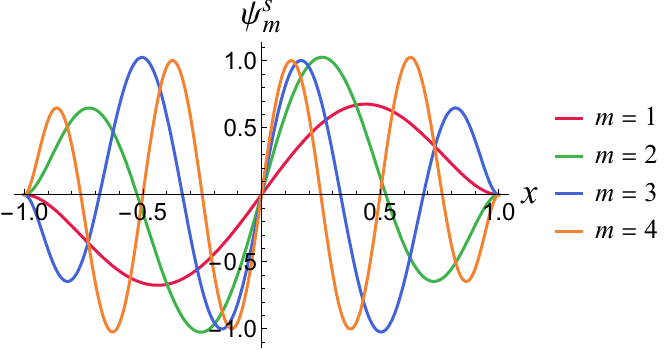}
		\caption{Odd eigenfunctions.~\label{fig:efunc_015_odd}}
	\end{subfigure}
	\caption{The profiles of the (a) even and (b) odd eigenfunctions of EVP~\eqref{eq:6th_EVP} for $m=1,2,3,4$. The even eigenfunction $\psi_0^c(x)=x^4-2 x^2+1$, which corresponds to $\lambda_0^c=0$, for $m=0$ is shown in (a) along with the even ones.~\label{fig:efunc_015}}
\end{figure}
It is important to note here that $\lambda_0^c=0$ is an eigenvalue of EVP~\eqref{eq:6th_EVP} with corresponding eigenfunction 
\begin{equation}
	\psi_0^c(x)=x^4-2 x^2+1.
\end{equation}
The notation is due to the fact that $x^4-2 x^2+1$ is an even function of $x$.  The profiles of the first few members of the two sets of eigenfunctions are presented in figure~\ref{fig:efunc_015}.

We now proceed with the solution of EVP~\eqref{eq:6th_EVP_adj}. To explicitly demonstrate proposition~\ref{thm:prop_n-th_order_eigen}, we do not assume that the eigenvalues of   problem~\eqref{eq:6th_EVP_adj} are the same as those of EVP~\eqref{eq:6th_EVP}, and so we denote them by $\mu$. Therefore, ODE~\eqref{eq:ODE_hg_6_adj} now reads 
\begin{equation}
	    \label{eq:ODE_hg_6_adj_mu}
		-\frac{\rd^6 \phi}{\rd x^6} = \mu^6 \phi .
\end{equation}	
Following the same process as in the derivation of \eqref{eq:gensol_ODE_hg_6}, we write the general solution of the  homogeneous ODE~\eqref{eq:ODE_hg_6_adj_mu}  in the form
\begin{multline}
	\label{eq:gensol_ODE_hg_6_adj} 
	\phi(x) = \overbrace{c_1 \sin(\mu x) 
		+ c_5 \sin\left(\tfrac{1}{2} \mu x\right) \cosh \left(\tfrac{\sqrt{3}}{2} \mu x\right) 
		+ c_6 \cos\left(\tfrac{1}{2}\mu x\right) \sinh \left(\tfrac{\sqrt{3}}{2} \mu x\right)}^{\phi^s(x)\;\;\text{(odd)}}\\
	+ \underbrace{c_2 \cos(\lambda x)
		+ c_3 \sin\left(\tfrac{1}{2} \mu x\right) \sinh \left(\tfrac{\sqrt{3}}{2} \mu x\right)
		+ c_4 \cos\left(\tfrac{1}{2} \mu x\right) \cosh \left(\tfrac{\sqrt{3}}{2} \mu x\right)}_{\phi^c(x)\;\;\text{(even)}} \,,
\end{multline}
where $\phi^s(x)$ and $\phi^c(x)$ denote odd and even functions of $x$, respectively. We note that the constants $c_1,\ldots,c_6$ in \eqref{eq:gensol_ODE_hg_6_adj} are \emph{different} from those that appear in \eqref{eq:gensol_ODE_hg_6}. To evaluate these constants, we impose BCs~\eqref{eq:6th_BCs_adj} on $\phi^s(x)$ and $\phi^c(x)$ separately. This yields homogeneous linear systems for $c_1,c_5,c_6$ and $c_2,c_3,c_4$, respectively. To find nontrivial solutions, we set the determinants of the coefficient matrices of these linear systems equal to zero and arrive at the eigenvalue relations:
\begin{subequations}
\label{eq:evals_sixth_015_adj}
\begin{alignat}{3}
		&\text{even}:\qquad & \cos{(2 \mu^c)} + 2\cos{ \mu^c} \cosh{(\sqrt{3} \mu^c)} - 3  &= 0\,,  \label{eq:evals_even_1_2_3}\\
		&\text{odd}:\qquad & \sin{\mu^s} \left(\cos{\mu^s} - \cosh{(\sqrt{3} \mu^s)}\right)  &= 0\,,  \label{eq:evals_odd_1_2_3}
\end{alignat}
\end{subequations}
which are the same as \eqref{eq:evals_sixth_015}, confirming the prediction of the theory that adjoint EVPs have equal eigenvalues (see proposition~\ref{thm:prop_n-th_order_eigen}). Therefore, once again the ``odd'' eigenvalues are given by $\mu_m^s=m\pi (\equiv\lambda_m^s)$ for $m\in\mathbb{N}$, while the ``even'' eigenvalues are $\mu_m^c=\lambda_m^c$ for $m\in\mathbb{N}$ (see table~\ref{tab:eigenvalues} for values and the asymptotic relation). As expected, $\mu_0^c=0$ is an eigenvalue of  EVP~\eqref{eq:6th_EVP_adj}. Its corresponding eigenfunction is $\phi_0^c(x)=1$. 

To determine the constants $c_2$, $c_3$, and $c_4$ for the even eigenfunctions $\phi_m^c(x)$ and $c_1$, $c_5$, and $c_6$ for the odd eigenfunction $\phi_m^s(x)$, which appear in \eqref{eq:gensol_ODE_hg_6_adj}, we follow the process used for deriving \eqref{eq:efuncs_6_015}. The difference is that here we impose BCs~\eqref{eq:6th_BCs_adj} instead.  We find
\begin{subequations}
	\label{eq:efuncs_6_123}
	\begin{align}
		\phi_m^c(x) &= 
		\cos{\mu_m^c x} 
		+ \Bigg[\tfrac{2 \sin{\mu_m^c} \bigg(\cos{\tfrac{\mu_m^c}{2}} \sinh{ \tfrac{\sqrt{3}\, \mu_m^c}{2}} - \sqrt{3} \sin{\tfrac{\mu_m^c }{2}} \cosh{\tfrac{\sqrt{3}\, \mu_m^c}{2}}\bigg)}{\cos{\mu_m^c} - \cosh{\sqrt{3}\mu_m^c}} \Bigg] \sin{\tfrac{\mu_m^c}{2} x} \sinh{\tfrac{\sqrt{3}\, \mu_m^c }{2}  x} \nonumber\\ 
		&\phantom{=}+ \Bigg[\tfrac{\csc {\tfrac{\mu_m^c }{2}} \sech{\tfrac{\sqrt{3}\, \mu_m^c}{2}}\sin{\mu_m^c} \bigg(\sqrt{3} \cot{\tfrac{\mu_m^c }{2}}\tanh{\tfrac{\sqrt{3}\, \mu_m^c}{2}} + 1\bigg)}{1 + \cot ^2{\tfrac{\mu_m^c}{2}} \tanh ^2{\tfrac{\sqrt{3}\, \mu_m^c}{2}}} \Bigg] \cos{\tfrac{\mu_m^c}{2}  x} \cosh{\tfrac{\sqrt{3}\,\mu_m^c}{2} x}	 \,,\label{eq:efuncs_6_123_even} \\ 
		\phi_m^s(x) &= \sin{\mu_m^s x} + \Bigg[\tfrac{2 \cos{\mu_m^s} \bigg(\cos{\tfrac{\mu_m^s}{2}} \cosh{ \tfrac{\sqrt{3}\, \mu_m^s}{2}} - \sqrt{3} \sin{\tfrac{\mu_m^s}{2}} \sinh{\tfrac{\sqrt{3}\, \mu_m^s}{2}}\bigg)}{\cos{\mu_m^s} + \cosh{\sqrt{3}\,\mu_m^s}} \Bigg] \sin{\tfrac{\mu_m^s}{2} x} \cosh{\tfrac{\sqrt{3}\,\mu_m^s}{2}  x} \nonumber\\ 
		&\phantom{=}-	\Bigg[\tfrac{\cos{\mu_m^s} \csch{ \tfrac{\sqrt{3}\, \mu_m^s}{2}} \bigg(\sin{\tfrac{\mu_m^s}{2}} + \sqrt{3} \cos{\tfrac{\mu_m^s }{2}} \coth{\tfrac{\sqrt{3}\, \mu_m^s}{2}}\bigg)}{\sin^2{\tfrac{\mu_m^s}{2}} + \cos^2{\tfrac{\mu_m^s }{2}}\coth^2{\tfrac{\sqrt{3}\,\mu_m^s}{2}}} \Bigg]\cos{\tfrac{\mu_m^s}{2} x} \sinh{\tfrac{\sqrt{3}\, \mu_m^s }{2}  x} \,, \label{eq:efuncs_6_123_odd} 
	\end{align}
\end{subequations}
for $m\in\mathbb{N}$.

The first few members of each adjoint eigenfunction set are plotted in Fig.~\ref{fig:efunc_123}.
\begin{figure}[h!]
	\centering
	\begin{subfigure}[b]{0.45\textwidth}
		\centering
		\includegraphics[width=\textwidth]{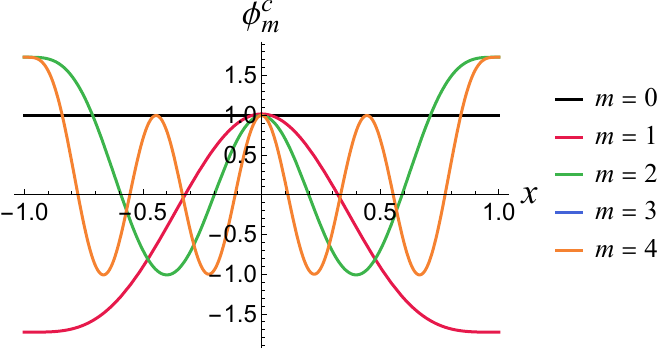}
		\caption{Even eigenfunctions.~\label{fig:efunc_123_even}}
	\end{subfigure}
	\hfill
	\begin{subfigure}[b]{0.45\textwidth}
		\centering
		\includegraphics[width=\textwidth]{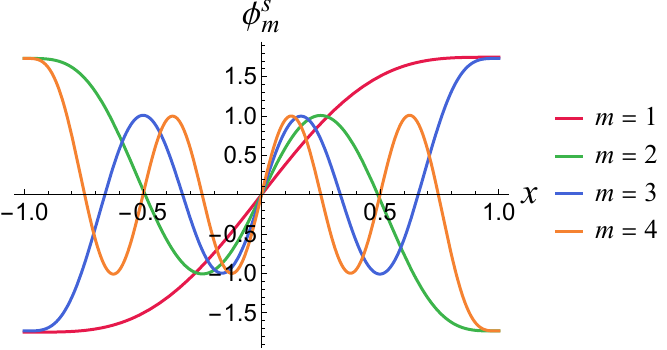}
		\caption{Odd eigenfunctions.~\label{fig:efunc_123_odd}}
	\end{subfigure}
	\caption{The profiles for $m=1,2,3,4$ of the (a) even, and (b) odd eigenfunctions of the adjoint EVP~\eqref{eq:6th_EVP_adj} associated with~\eqref{eq:6th_EVP}. The even eigenfunction $\phi_0^c(x)=1$, which corresponds to $\mu_0^c=0$, for $m=0$ is shown in (a) along with the even ones.~\label{fig:efunc_123}}
\end{figure}

Having derived our sets of eigenfunctions for the adjoint EVPs~\eqref{eq:6th_EVP} and \eqref{eq:6th_EVP_adj}, we apply proposition~\ref{thm:prop_n-th_order_biorthog} to obtain the following.
\begin{proposition}{[Biorthogonality of the eigenfunctions of the sixth-order adjoint EVPs]}
	\label{thm:prop_6-th_order_biorthog}
	\ Consider the sets of even $\Bigl\{\lambda_\ell^c\Bigr\}_{\ell=0}^{\infty}$ and odd $\Bigl\{\lambda_\ell^s\Bigr\}_{\ell=1}^{\infty}$ eigenvalues of EVP~\eqref{eq:6th_EVP},  and their counterparts $\Bigl\{\mu_\ell^c\Bigr\}_{\ell=0}^{\infty}$ and $\Bigl\{\mu_\ell^s\Bigr\}_{\ell=1}^{\infty}$ of its adjoint problem\!~ \eqref{eq:6th_EVP_adj}. Since $\lambda_\ell^c=\mu_\ell^c$ \ $\forall \ell\in\mathbb{N}\cup\{0\}$ and  $\lambda_\ell^s=\mu_\ell^s$ \ $\forall \ell \in \mathbb{N}$, then the following biorthogonality relations hold for their corresponding eigenfunctions, 
\begin{subequations}
	\label{eq:biorth_six}
	\begin{align}
		\langle \psi_\ell^c, \phi_m^c \rangle&= \begin{cases}
			0 & \text{if}\quad  \ell\neq m  \\
			c_{m}\neq 0 & \text{if}\quad  \ell = m
		\end{cases} 
		\,, \\[2mm]
		\langle \psi_\ell^s, \phi_m^s \rangle&= \begin{cases}
			0 & \text{if}\quad  \ell \neq m  \\
			s_{m}\neq 0 & \text{if}\quad  \ell = m
		\end{cases} 
		\,,
	\end{align}
\end{subequations}	
	where the $c_m$ and $s_m$ are given by the formulas
	\begin{subequations}
		\label{eq:biorth_const}
		\begin{align}
			c_0&= \left\langle \psi_0^c(x), \phi_0^c(x) \right\rangle =  \int_{-1}^1 (x^4 - 2x^2 +1) \, \rd x = 16/15 , \label{eq:biorthog_0} \\
			c_m&=\tfrac{\csc^2{\tfrac{\lambda_m^c}{2}}  \sech^2{\tfrac{\sqrt{3}\,\lambda_m^c}{2}}}{24\lambda_m^c  \left(\sqrt{3}\sin {\lambda_m^c} - \sinh{(\sqrt{3}\lambda_m^c)} \right) \left( 1 + \cot ^2{\tfrac{\lambda_m^c}{2}} \tanh^2{\tfrac{\sqrt{3}\,\lambda_m^c}{2}}\right)} \label{eq:biorthog_even}\\
			&\phantom{=} \times \Bigg[4 \sqrt{3} \cos ^2{\lambda_m^c} \cosh{(2\sqrt{3}\lambda_m^c)} - 2\sqrt{3}\cosh{(\!\sqrt{3}\,\lambda_m^c)} \big( -18\lambda_m^c  \sin{\lambda_m^c} + 7 \cos{\lambda_m^c} + \cos{3\lambda_m^c} \ \big) \nonumber \\
			&\qquad - 2\!\sqrt{3}\, \big(\, 9\lambda_m^c \sin{2\lambda_m^c}  - 7\cos{2\lambda_m^c} + \cos{4\lambda_m^c}\, \big)  +  6 \sinh{(\sqrt{3}\lambda_m^c)} \big(7 \sin{\lambda_m^c} - \sin{3\lambda_m^c} + 2 \lambda_m^c \cos{3 \lambda}   \  \big) \nonumber \\
			&\qquad - 6 \big(\lambda_m^c +\sin{2\lambda_m^c}\big) \sinh{(2\sqrt{3}\lambda_m^c)} \Bigg]\,,  \nonumber \\
			s_m&=-\tfrac{\csch{\tfrac{\sqrt{3}\,\lambda_m^c}{2}}  \sech{\tfrac{\sqrt{3}\,\lambda_m^c}{2}}}{12\lambda_m^c \big( \cos{\lambda_m^c} +  \cosh{(\sqrt{3}\lambda_m^c)} \big)}  \label{eq:biorthog_odd}\\
			&\phantom{=}\times \Bigg[ - 2\sqrt{3}\cos{2\lambda_m^c} \cosh^2{(\sqrt{3}\lambda_m^c)} +\sqrt{3}\sin{2\lambda_m^c} \cosh{(\sqrt{3}\lambda_m^c)} \big(2 \sin{\lambda_m^c} - 3 \lambda_m^c \cos{\lambda_m^c}\big)     \nonumber  \\
			&\phantom{=}+\sqrt{3} \big(3 \lambda_m^c \sin{2\lambda_m^c} + \cos{4\lambda_m^c} + \cosh{(2 \sqrt{3}\lambda_m^c)}\big)  + 6 \lambda_m^c \cos^3{\lambda_m^c}\sinh{(\sqrt{3}\lambda_m^c)} - 3\lambda_m^c \sinh{(2 \sqrt{3}\lambda_m^c)} \Bigg] . \nonumber
		\end{align}
	\end{subequations}
\end{proposition}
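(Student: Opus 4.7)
The plan is to reduce the claim to two tasks: (i) confirm the biorthogonality pattern in~\eqref{eq:biorth_six} from the general theory assembled in subsection~\ref{sub:Deriv_Biorth_Expand}, and (ii) compute the diagonal constants $c_m$ and $s_m$ by direct integration using the closed-form eigenfunctions derived above.

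For task (i), I would invoke proposition~\ref{thm:6th_EVP_adj} (which establishes that~\eqref{eq:6th_EVP_adj} is the adjoint of~\eqref{eq:6th_EVP}) together with proposition~\ref{thm:prop_n-th_order_biorthog}, yielding $\langle \psi_i,\phi_j\rangle = 0$ whenever the associated eigenvalues differ. The preceding construction has established that $\lambda_m^c$ ($m\geq 0$) and $\lambda_m^s$ ($m\geq 1$) are all distinct (see table~\ref{tab:eigenvalues} and the asymptotic relation $\lambda_m^c\sim(m+1/2)\pi$), so the off-diagonal entries within each parity class vanish. Cross-parity inner products such as $\langle \psi_\ell^c,\phi_m^s\rangle$ are in fact zero by a simple parity argument on the symmetric interval $[-1,1]$; this decoupling is not part of the stated claim but it justifies treating the even and odd subsystems independently in the expansions to come.

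For task (ii), the case $m=0$ is immediate: with $\psi_0^c(x)=(1-x^2)^2$ and $\phi_0^c(x)=1$, direct integration gives $c_0 = 16/15$, which is~\eqref{eq:biorthog_0}. For $m\geq 1$, I would substitute the closed forms~\eqref{eq:efuncs_6_015} and~\eqref{eq:efuncs_6_123} into the $L^2[-1,1]$ inner product. Each integrand is a finite linear combination of products of the form $\{\sin,\cos\}(\alpha\lambda x)\cdot\{\sinh,\cosh\}(\beta\lambda x)$ with $\alpha,\beta\in\{0,\tfrac{1}{2},1,\tfrac{\sqrt{3}}{2}\}$, all of which have elementary antiderivatives. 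Evaluating at $x=\pm1$ and repeatedly applying the eigenvalue relations~\eqref{eq:evals_sixth_015} to eliminate redundant terms should collapse the resulting expression into the compact forms~\eqref{eq:biorthog_even} and~\eqref{eq:biorthog_odd}.

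The principal obstacle is the symbolic simplification in task (ii). The raw expansion of, for instance, $\langle \psi_m^c,\phi_m^c\rangle$ produces a lengthy sum whose collapse to~\eqref{eq:biorthog_even} requires repeated use of~\eqref{eq:evals_even_0_1_5} (and analogously~\eqref{eq:evals_odd_0_1_5} for $s_m$), together with standard sum-to-product identities. In practice this step is best executed in a computer algebra system, as was already done to produce~\eqref{eq:efuncs_6_015} and~\eqref{eq:efuncs_6_123}. A secondary check, needed for proposition~\ref{thm:Exp_n-th_order} and the Petrov--Galerkin scheme of section~\ref{sec:Galerkin}, is that $c_m\neq 0$ and $s_m\neq 0$ for all $m$; this can be confirmed by numerical evaluation for small $m$ combined with a leading-order asymptotic analysis of the dominant $\sinh(\sqrt{3}\lambda_m^c)$ and $\cosh(\sqrt{3}\lambda_m^c)$ terms as $m\to\infty$.
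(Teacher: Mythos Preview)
Your proposal is correct and matches the paper's approach: the paper obtains the biorthogonality relations~\eqref{eq:biorth_six} by directly applying proposition~\ref{thm:prop_n-th_order_biorthog} (after noting the eigenvalues are distinct), and the explicit constants~\eqref{eq:biorth_const} are presented without derivation beyond the implicit use of \textsc{Mathematica} for the integrals, exactly as you outline. Your additional remarks on the parity decoupling and on verifying $c_m,s_m\neq 0$ via asymptotics are sound elaborations that the paper leaves implicit.
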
	
Expressions~\eqref{eq:biorthog_even} and \eqref{eq:biorthog_odd} are cumbersome, however, for $m\geq7$ both $c_m=1$ and $s_m=1$ to \emph{at least 16 digits} after the decimal point (absolute error $\leq 10^{-16}$).  This simplifies the calculation of the spectral coefficients in \eqref{eq:full_expansion_formula} below, improving the efficiency of the proposed computational method. 
		
We conclude this section by stating the following proposition, which forms the foundation of our spectral method.
	\begin{proposition}{[Expansion into sixth-order biorthogonal sets of functions (applying \cite{CodLev})]}
		\label{prop:EVP_beam_6}
		The two sets of solutions $\psi_m^c(x)$ and $\psi_m^s(x)$ of EVP~\eqref{eq:6th_EVP} given by \eqref{eq:efuncs_6_015} and supplemented by $\psi_0^c(x) = (x)=x^4-2 x^2+1$, along with the corresponding solution sets $\phi_m^c(x)$ and $\phi_m^s(x)$ of its adjoint EVP~\eqref{eq:6th_EVP_adj} given by \eqref{eq:efuncs_6_123} and supplemented by $\phi_0^c(x) =  1$, form a \emph{complete biorthogonal set of functions} on $L^2[-1,1]$. Therefore, any function $u(x)\in L^2[-1,1]$ can be expanded as 
		\begin{subequations}
			\label{eq:full_expansion_formula} 
			\begin{alignat}{2}
				u(x) &=  u_0^c \psi_0^c(x) + \sum_{m=1}^\infty u_m^c \psi_m^c(x) + u_m^s \psi_m^s(x), \quad & \label{eq:u_expansion_formula}\\
				u_0^c &:= \frac{1}{2c_0}\big\langle u(x), \phi_0^c(x) \big\rangle = \frac{1}{2c_0}\int_{-1}^{+1} u(x) \, \rd x, \quad &\\
				u_m^c &:= \frac{1}{c_m}\left\langle u(x), \phi_m^c(x) \right\rangle = \frac{1}{c_m}\int_{-1}^{+1} u(x) \phi_m^c(x) \,\rd x,\quad &m\ge 1,\\
				u_m^s &:= \frac{1}{s_m}\left\langle u(x), \phi_m^s(x) \right\rangle = \frac{1}{s_m}\int_{-1}^{+1} u(x) \phi_m^s(x) \,\rd x,\quad &m\ge 1,
			\end{alignat}
		\end{subequations}
where $c_0$, $c_m$ and $s_m$ are given by \eqref{eq:biorth_const} and the series \eqref{eq:u_expansion_formula} converges to $u(x)$ in $L^2[-1,1]$.
	\end{proposition}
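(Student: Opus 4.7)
The plan is to invoke the Birkhoff framework that has already been set up. By Proposition~\ref{thm:6th_EVP_adj}, the EVP~\eqref{eq:6th_EVP} and its adjoint~\eqref{eq:6th_EVP_adj} satisfy the adjointness identity $\langle \mathscr{L}\psi, \phi \rangle = \langle \psi, \mathscr{M}\phi \rangle$, so Propositions~\ref{thm:prop_n-th_order_eigen}--\ref{thm:Exp_n-th_order} apply directly to the explicit eigenpairs constructed in \eqref{eq:efuncs_6_015} and \eqref{eq:efuncs_6_123}. In particular, Proposition~\ref{thm:prop_n-th_order_eigen} already ensures the two EVPs share the same spectrum, as verified explicitly in \eqref{eq:evals_sixth_015_adj}.

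The next step is to combine the abstract biorthogonality of Proposition~\ref{thm:prop_n-th_order_biorthog}, which gives $\langle \psi_\ell, \phi_m \rangle = 0$ whenever the eigenvalues differ, with the parity structure of the eigenfunctions. Within the ``even'' subsystem $\{\psi_\ell^c, \phi_m^c\}$ and within the ``odd'' subsystem $\{\psi_\ell^s, \phi_m^s\}$, the same-parity cases of \eqref{eq:biorth_six} follow directly from Proposition~\ref{thm:prop_6-th_order_biorthog}. For the cross terms no abstract theory is needed: since $\psi_\ell^c, \phi_m^c$ are even and $\psi_\ell^s, \phi_m^s$ are odd in $x$, the products $\psi_\ell^c \phi_m^s$ and $\psi_\ell^s \phi_m^c$ are odd on the symmetric interval $[-1,1]$ and integrate to zero. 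The diagonal constants $c_0, c_m, s_m$ in \eqref{eq:biorth_const} are then simply the explicit $L^2$ inner products of each eigenfunction with its adjoint partner, which reduce to routine (if lengthy) integrations of products of trigonometric and hyperbolic functions, best carried out symbolically.

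The expansion claim then follows from Proposition~\ref{thm:Exp_n-th_order}. Pairing the candidate series $u = u_0^c \psi_0^c + \sum_{m\ge 1}(u_m^c \psi_m^c + u_m^s \psi_m^s)$ with $\phi_n^c$ or $\phi_n^s$ and using biorthogonality immediately fixes each coefficient as $\langle u, \phi \rangle / \langle \psi, \phi \rangle$, which upon substitution of the diagonal constants from Proposition~\ref{thm:prop_6-th_order_biorthog} reproduces the formulas in \eqref{eq:full_expansion_formula}. To upgrade from the smooth-function convergence in Proposition~\ref{thm:Exp_n-th_order} to $L^2[-1,1]$ convergence for arbitrary $u$, I would argue by density: continuously differentiable functions are dense in $L^2[-1,1]$, and the asymptotic behavior $\lambda_m^{c,s} \sim (m+1/2)\pi$ shown in table~\ref{tab:eigenvalues} implies that for large $m$ the eigenfunctions are asymptotically trigonometric, so that the biorthogonal system is a perturbation of the ordinary Fourier basis and the partial-sum projectors are uniformly bounded on $L^2[-1,1]$.

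The main obstacle is precisely this last upgrade. Proposition~\ref{thm:Exp_n-th_order} delivers convergence only for continuously differentiable $u$, whereas the proposition claims convergence in $L^2[-1,1]$ for every $u \in L^2[-1,1]$. A rigorous justification requires the completeness and Parseval-type result for \emph{regular} $n$-th-order non-self-adjoint problems in Birkhoff's sense, which is developed in Coddington and Levinson~\cite{CodLev}. The one genuinely nonroutine step is therefore to verify that the specific boundary conditions~\eqref{eq:6th_BCs} and \eqref{eq:6th_BCs_adj} satisfy Birkhoff's regularity criterion for $n=6$; once that is checked, $L^2$ completeness of the biorthogonal system, and hence the stated expansion, follow as a citation.
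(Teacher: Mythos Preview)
Your proposal is correct and follows essentially the same route as the paper: both reduce the claim to the Birkhoff/Coddington--Levinson theory for regular $n$-th-order problems, invoking Propositions~\ref{thm:prop_n-th_order_biorthog} and~\ref{thm:Exp_n-th_order} together with a citation to Chapter~12 of~\cite{CodLev} for the $L^2$ completeness. The paper's justification is terser than yours---it simply lists the hypotheses that must be verified (homogeneous BCs, distinctness of the eigenvalues, linear independence of the eigenfunction sets) rather than spelling out the parity cross-term argument or the density upgrade you sketch---but the underlying strategy is identical.
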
	
This proposition follows from propositions~\ref{thm:prop_n-th_order_biorthog}, \ref{thm:Exp_n-th_order}, and Chapter 12 of \cite{CodLev}. Crucially, the relevant theorems in \cite{CodLev} were applied after taking into account the fact that (i) BCs~\eqref{eq:6th_BCs} and \eqref{eq:6th_BCs_adj} are homogeneous, (ii) the eigenvalues of \eqref{eq:6th_EVP} (and hence \eqref{eq:6th_EVP_adj}) are distinct, and (iii) the sets of eigenfunctions $\Bigl\{\psi_0^c\Bigr\} \cup \Bigl\{\psi_m^c,\, \psi_m^s\Bigr\}_{m=1}^{\infty}$ of~ \eqref{eq:6th_EVP} and $\Bigl\{\phi_0^c\Bigr\} \cup \Bigl\{\phi_m^c,\, \phi_m^s\Bigr\}_{m=1}^{\infty}$ of \eqref{eq:6th_EVP_adj} are linearly independent.

\section{The Petrov--Galerkin spectral method}\label{sec:Galerkin}

In the previous section, $n$ indicated the order of the EVP, and $m$ was the index used for the countable infinity of eigenvalues and eigenfunctions. We have now restricted ourselves to a specific sixth-order EVP, so in the following, we reuse $n$ as the index for the countable infinity of eigenvalues and eigenfunctions to simplify the notation.

\subsection{Method formulation}\label{subsec:MethForm}
Having established the necessary theoretical foundation and derived our biorthogonal sets of eigenfunctions, we proceed with the development of the proposed Petrov--Galerkin spectral method~\cite{Shen} for IBVP~\eqref{eq:IBVP_6}. To solve the sixth-order PDE~\eqref{eq:PDE_6} using a Petrov--Galerkin method based on the results above, we expand the sought function (i.e., the solution of the IBVP) $u(x,t)$ into our set of \emph{trial functions} $\psi_0^c(x)=1$, $\big\{\psi_{n}^c(x)\big\}_{n=1}^{M}$, and $\big\{\psi_{n}^s(x)\big\}_{n=1}^{M}$ as in \eqref{eq:full_expansion_formula}, allowing the expansion coefficients to be functions of time, and truncate the series at $M$ terms:
\begin{equation}
	u(x,t) \approx u_{\mathrm{spectral}}(x,t) = u_0^c(t) \psi_0^c(x) + \sum_{n=1}^{M} u_n^c(t) \psi_n^c(x) + u_n^s(t) \psi_n^s(x).	
	\label{eq:u_x_t_expansion_formula}
\end{equation}
Note that the spectral expansion~\eqref{eq:u_x_t_expansion_formula} intrinsically satisfies BCs~\eqref{eq:BC_6}, which is an important advantage of both classical Galerkin and Petrov--Galerkin approaches.

Next, since PDE~\eqref{eq:PDE_6} contains the term ${\D\partial^2 u}/{\D\partial x^2}$, the second derivatives of the basis functions must be expressed as linear combinations of the same basis functions:
\begin{subequations}\label{eq:beta_expansion_series}
	\begin{alignat}{2}
		\frac{\rd^2\psi_n^{c}}{\rd x^2}  &= \sum_{m=1}^\infty \frac{\D 1}{\D c_m}\,\beta_{nm}^{c} \psi_{m}^{c}(x) , \qquad\text{where}\quad &\beta_{nm}^{c} &:= \left\langle \frac{\rd^2\psi_n^{c}}{\rd x^2}  ,\phi_m^{c} \right\rangle = \int_{-1}^{+1} \frac{\rd^2\psi_n^{c}}{\rd x^2} \psi_m^{c} \,\rd x,  \label{eq:beta_series_even}\\
		\frac{\rd^2\psi_n^{s}}{\rd x^2}  &= \sum_{m=1}^\infty  \frac{\D 1}{\D s_m}\,\beta_{nm}^{s} \psi_{m}^{s}(x), \qquad\text{where}\quad &\beta_{nm}^{s} &:= \left\langle \frac{\rd^2\psi_n^{s}}{\rd x^2}  ,\phi_m^{s} \right\rangle = \int_{-1}^{+1} \frac{\rd^2\psi_n^{s}}{\rd x^2} \psi_m^{s} \,\rd x,  \label{eq:beta_series_odd}
	\end{alignat}
\end{subequations}	
and $c_m$,  $s_m$ are given in \eqref{eq:biorth_const}. Note that whilst $\beta_{n0}^{s}=0$ by definition,  we find  $\beta_{n0}^c=0$ by computing the integral $\int_{-1}^{+1} \frac{\rd^2\psi_n^{c}}{\rd x^2}\,\rd x$. The expressions for $\beta_{nm}^{c,s}$ will be given in section~\ref{sec:2nd_der_expansion_formulas} below. It follows that
\begin{subequations}\label{eq:deriv_spectral_series}
	\begin{align}
		\frac{\partial^2 u_{\mathrm{spectral}}}{\partial x^2}  &=  \sum_{n=1}^M \sum_{m=1}^M  u_n^c(t) \frac{\D 1}{\D c_m}\,\beta_{nm}^{c} \psi_{m}^{c}(x) + \sum_{n=1}^M \sum_{m=1}^M u_n^s(t) \frac{\D 1}{\D s_m}\,\beta_{nm}^{s} \psi_{m}^{s}(x),\\
		\frac{\partial^6 u_{\mathrm{spectral}}}{\partial x^6}  &=  -\sum_{n=1}^M (\lambda^{c}_n)^6 u_n^c(t) \psi_n^c(x) - \sum_{n=1}^M (\lambda^{s}_n)^6 u_n^s(t) \psi_{n}^{s}(x).
	\end{align}%
\end{subequations}

Next, substituting \eqref{eq:u_x_t_expansion_formula} and \eqref{eq:deriv_spectral_series} into \eqref{eq:PDE_6}, taking successive inner products with the \emph{test functions} $\phi_0^c(x)=1$, $\big\{\phi_{\ell}^c(x)\big\}_{\ell=1}^{M}$, and $\big\{\phi_{\ell}^s(x)\big\}_{\ell=1}^{M}$, and using the biorthogonality property~\eqref{eq:biorth_six} of the sets of eigenfunctions, we obtain the semi-discrete (dynamical) system:
\begin{subequations}\label{eq:dynamical_system}
	\begin{alignat}{2}
		c_0\frac{\rd u_0^c}{\rd t} &= f_0^c(t), \quad &(\ell=0), \label{eq:dyn_sys_0}\\
		c_{\ell}\frac{\rd u_{\ell}^c}{\rd t} &=  \sum_{n=1}^M \left[  \mathcal{B}\,\beta_{n \ell}^{c} - c_{\ell}(\lambda^{c}_n)^6 \delta_{n \ell} \right] u_n^c(t) + f_\ell^c(t), \qquad &\ell\ge 1, \label{eq:dyn_sys_c}\\
		s_{\ell}\frac{\rd u_{\ell}^s}{\rd t} &= \sum_{n=1}^M \left[ \mathcal{B}\,\beta_{n \ell}^{s} - s_{\ell} (\lambda^{s}_n)^6 \delta_{n \ell} \right] u_n^s(t) + f_\ell^s(t), \qquad &\ell\ge 1. \label{eq:dyn_sys_s}
	\end{alignat}
\end{subequations}
where $\Big\{f_0^c(t), f_\ell^c(t),  f_\ell^s(t) \Big\}$ are the inner products of the known function $f(x,t)$ on the right-hand side of \eqref{eq:PDE_6} with $\phi_0^c(x)=1$, $\Big\{\phi_{\ell}^c(x)\Big\}_{\ell=1}^{M}$ and $\Big\{\phi_{\ell}^s(x)\Big\}_{\ell=1}^{M}$ respectively. For the second derivative term and the right-hand side term, the constants $c_{\ell}$ and $s_{\ell}$ simplify; however, this is not the case for the remaining terms. Nonetheless, as discussed previously, we need only include them for $\ell<7$. 

System~\eqref{eq:dynamical_system} can be solved using a variety of techniques (e.g., following  \cite{NectarIvan2025}), but its solution is the subject of future work. We conclude the current discussion by noting that equation~\eqref{eq:dyn_sys_0} for the zeroth coefficient $u_0^c$, the system~\eqref{eq:dyn_sys_c} for the coefficients $u_\ell^c$ of the even eigenfunctions and the corresponding system~\eqref{eq:dyn_sys_s} for the coefficients $u_\ell^s$ of the odd eigenfunctions are decoupled; this means that we must solve two $M\times M$ systems plus one ODE rather than a $(2M+1)\times(2M+1)$ system. This observation makes the solution approach more efficient.

\subsection{Second-derivative expansion formulas}\label{sec:2nd_der_expansion_formulas}

We first present the formula for expanding the second derivative of an even eigenfunction into a series of the even basis functions. After a lengthy calculation, using \textsc{Mathematica}'s algebraic manipulation capabilities as well as hyperbolic and trigonometric identities, from \eqref{eq:beta_series_even} we find, for a fixed $n\in\mathbb{N}$, and any $m=1,2,3,\ldots$\,, that
	{\small
	\begin{empheq}[left={\beta_{nm}^c=\empheqlbrace\,}]{align}
		&\frac{(\lambda_n^c)^2}{\left(\cos{\mu_m^c} - \cosh{(\sqrt{3} \mu_m^c)} \right) \left((\lambda_n^c)^6 - (\mu_m^c)^6\right) \left(\sqrt{3} \sin{\lambda_n^c} - \sinh{(\sqrt{3} \lambda_n^c)}\right) } \label{eq:sec_deriv_even_form}	\\
		&\times \Bigg\{2 \lambda_n^c (\mu_m^c)^4 \Big[ \cos{2\lambda_n^c} \left(\sqrt{3} \cos{2\mu_m^c} + 3 \sin{\mu_m^c} \sinh{(\sqrt{3}\mu_m^c)} - \sqrt{3} \cos{\mu_m^c} \cosh{(\sqrt{3} \mu_m^c)} \right) \nonumber  \\ 
		&\quad +3 \sin{\lambda_n^cf} \sinh{(\sqrt{3} \lambda_n^c)} \left(\cos{2 \mu_m^c} + \sqrt{3} \sin{\mu_m^c} \sinh{(\sqrt{3} \mu_m^c)} - \cos{\mu_m^c}\cosh{(\sqrt{3} \mu_m^c)}\right) \nonumber  \\
		&\quad +\cos{\lambda_n^c} \cosh{(\sqrt{3} \lambda_n^c)} \left(-\sqrt{3} \cos{2\mu_m^c} - 3\sin{\mu_m^c} \sinh{(\sqrt{3} \mu_m^c)} + \sqrt{3}\cos{\mu_m^c} \cosh{(\sqrt{3} \mu_m^c)}\right)\Big] \nonumber \\
		&\quad + 6\mu_m^5\sin{\mu_m^c} \left(\cos{\mu_m^c}  - \cosh{(\sqrt{3} \mu_m^c)}\right) \nonumber \\
		&\qquad \times \left(\sqrt{3}\sin{2\lambda_n^c} - 3 \cos{\lambda_n^c}  \sinh{(\sqrt{3} \lambda_n^c)} + \sqrt{3} \sin{\lambda_n^c}\cosh{(\sqrt{3} \lambda_n^c)}\right)\Bigg\}\,, \qquad \text {if}\ \  n\neq m\,,    \nonumber\\[4mm]
		&\frac{\lambda_n^c}{12\left(\cos{\lambda_n^c} - \cosh{(\sqrt{3} \lambda_n^c)} \right)  \left(\sqrt{3} \sin{\lambda_n^c} - \sinh{(\sqrt{3} \lambda_n^c)}\right) } \nonumber \\
		&\times \Biggl\{\sqrt{3} \left(7 \cos{4\lambda _n^c} - 31 \cos{2 \lambda _n^c}\right) + \sqrt{3} \cosh{(\sqrt{3} \lambda_n^c)} \left(31 \cos{\lambda_n^c} + \cos{3\lambda_n^c}\right)  \nonumber   \\
		&-\sqrt{3}\cosh{(2 \sqrt{3} \lambda_n^c)}  \left(1 + 7 \cos{2 \lambda_n^c}\right) - 3 \sinh{\sqrt{3} \lambda_n^c)} \left(\sin{3 \lambda_n^c} - 31 \sin{\lambda_n}\right) - 21 \sin{2\lambda_n^c} \sinh{(2 \sqrt{3} \lambda_n^c)} \nonumber  \\
		&+ 6\lambda_n^c \times\Biggl[  \sqrt{3} \sin{2 \lambda_n^c} + \sqrt{3} \cosh{(\sqrt{3} \lambda_n^c)}  \left(\sin{\lambda_n^c} - \sin{3 \lambda _n}\right)  - \sinh{( 2 \sqrt{3} \lambda _n)} \nonumber \\
		&- \left(\cos{3 \lambda_n^c} - 3 \cos{\lambda_n^c}\right)  \sinh{(\sqrt{3} \lambda_n^c)} \Biggr]\Biggr\}	\,, \qquad \qquad  \qquad \qquad  \qquad \qquad  \qquad \text {if}\ \  n=m\,. 	\nonumber
	\end{empheq}
	}	
The `$c$' case of the expansion~\eqref{eq:beta_series_even} and the coefficient formula \eqref{eq:sec_deriv_even_form} (for the second derivative's expansion back into the even eigenfunctions) were verified numerically for different values of $n$. In figure~\ref{fig:psi_5_even_sec_deriv}(a)  the profile of ${\rd^2\psi_5^{c}}/{\rd x^2}$ is plotted along that of its spectral approximation. A strong Gibbs effect is observed near the boundaries, which is expected since ${\rd^2\psi_5^{c}}/{\rd x^2}$ does not satisfy any of the boundary conditions~\eqref{eq:6th_BCs}. In figure~\ref{fig:psi_5_even_sec_deriv}(b), the convergence rate of the same spectral series~\eqref{eq:beta_series_even} is demonstrated. It is observed that ${|\beta_{5m}^c|}/{|c_m|}$ decays algebraically for large $m$ as ${|\beta_{5m}^c|}/{|c_m|}\sim m^{-0.8}$. This decay was confirmed for all the examined values of $n$. 

\begin{figure}
	\centering
	\includegraphics[width=0.9\textwidth]{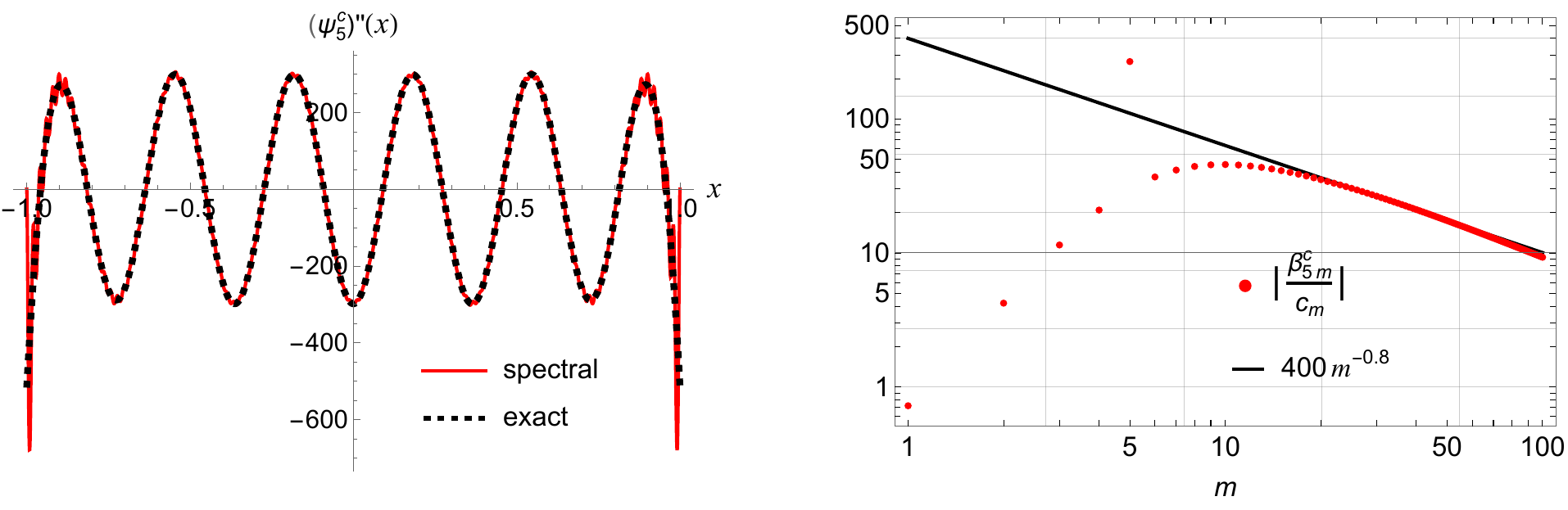}
	\text{ \quad (a) \qquad \qquad \quad \qquad \qquad \qquad  \qquad \qquad  \qquad  (b) \quad}
	\caption{The spectral expansion of ${\rd^2\psi_5^{c}}/{\rd x^2} \equiv (\psi^c_5)''(x)$ back into the even eigenfunctions~\eqref{eq:efuncs_6_015_even}, with $M=100$ terms. (a) The profile of the approximation compared to that of the exact representation of $(\psi^c_5)''(x)$. (b) The convergence rate of the absolute value of the spectral coefficients $|\beta_{5m}^c/c_m|$ alongside the asymptotic best fit  $400m^{-0.8}$ (for $m>20$). \label{fig:psi_5_even_sec_deriv}}
\end{figure}

For completeness, we also present the corresponding formula for expanding the second derivative of an odd eigenfunction into a series of the odd basis functions. Calculating the integral in~\eqref {eq:beta_series_odd}, we find, for a fixed $n\in\mathbb{N}$, and any $m=1,2,3,\ldots$\,, that
 {\small
 	\begin{empheq}[left={\beta_{nm}^s=\empheqlbrace\,}]{align}
 		&\ \,\frac{(\lambda^s_n)^2 \csch{(\frac{\sqrt{3}}{2} \lambda^s_n)} \sech{(\frac{\sqrt{3}}{2} \lambda^s_n)} }{4 \left(\cos{\mu^s _m} + \cosh{(\sqrt{3} \mu ^s_m)}\right) \left((\lambda^s_n)^6 - (\mu^s_m)^6\right)} \label{eq:sec_deriv_odd_form}	\\
 		&\ \,\times \Biggr\{ 2 (\lambda^s_n)^5 \sin{\lambda^s_n} \left(\cos{\lambda^s_n} - \cosh{(\sqrt{3}\lambda^s_n)}\right) \nonumber  \\ 
 		&\qquad \times  \left(\sqrt{3}\sin{2\mu^s_m} - 3 \cos{\mu^s_m} \sinh{(\sqrt{3}\mu^s_m)} + \sqrt{3} \sin{\mu^s_m}\cosh{(\sqrt{3}\mu^s_m)} \right)\nonumber  \\
 		&\quad +	2 \sqrt{3} (\mu^s_m)^2\,(\lambda^s_n)^3 \sin{\mu^s_m}  \sin{\lambda^s_n} \left(\cos{\mu^s_m}  - \cosh{(\sqrt{3}\mu^s_m)} \right) \left(\cos{\lambda^s_n} - \cosh{(\sqrt{3}\lambda^s_n)}\right) \nonumber \\
 		&\quad +12 (\mu^s_m)^4 \lambda^s_n \cos{\lambda^s_n} \sinh{(\sqrt{3}\lambda^s_n)} \left(\sin{2\mu^s_m} + \sqrt{3} \cos{\mu^s_m} \sinh{(\sqrt{3}\mu^s_m)} + \sin{\mu^s_m} \cosh{(\sqrt{3}\mu^s_m)}\right)\nonumber \\
 		&\quad +  3 (\mu^s_m)^5 \cos{\mu^s_m} \left(   \cos{\mu^s_m} + \cosh{(\sqrt{3}\mu^s_m)}  \right) \nonumber\\
 		&\qquad \times \left[\sqrt{3} \left(\cos{2 \lambda^s_n} + 3\right)  -  6 \sin{\lambda^s_n} \sinh{(\sqrt{3}\lambda^s_n)}  -   4 \sqrt{3} \cos{\lambda^s_n} \cosh{(\sqrt{3}\lambda^s_n)}   \right]   \Bigg\}\,, \qquad   \text {if}\ \  n\neq m\,,    \nonumber\\[4mm]
 		& 	  -\frac{\lambda^s_n \csch{(\frac{\sqrt{3}}{2} \lambda^s_n)}\sech{(\frac{\sqrt{3}}{2} \lambda^s_n)} }{96 \left( \cos{\lambda^s_n} + \cosh {(\sqrt{3}\lambda^s_n)} \right)}\nonumber \\
 		&\times \Biggl\{6 \left(\sin{\lambda^s_n} + \sin{3\lambda^s_n} \right) \sinh{(\sqrt{3}\lambda^s_n)} - 42 \sin{2\lambda^s_n} \sinh{(2\sqrt{3}\lambda^s_n)}  \nonumber   \\
 		& \quad +\sqrt{3} \left(8 \cos{2\lambda^s_n} + 13 \cos{4\lambda^s_n} + 3\right) + 22 \sqrt{3} \left(\cos{\lambda^s_n} - \cos{(\sqrt{3}\lambda^s_n)} \right) \cosh{(\sqrt{3}\lambda^s_n)}\nonumber  \\
 		& \quad + 24 \lambda^s_n \sinh{(\sqrt{3}\lambda^s_n)} \left(9 \cos{\lambda^s_n} + \cos{3\lambda^s_n} + 2 \cosh{(\sqrt{3}\lambda^s_n)} \right) \nonumber \\
 		&\quad  - 8 \sqrt{3} \left(2 \cos{2\lambda^s_n} + 1\right) \cosh{(2\sqrt{3}\lambda^s_n)}\Biggr\}	\,, \qquad \qquad  \qquad \qquad  \qquad \qquad  \qquad \text {if}\ \  n=m\,. 	\nonumber
 	\end{empheq}
 }

\subsection{Further expansion formulas}\label{sec:more_expansion_formulas}

For the first model problem to be considered in section~\ref{sec:model_problems}, the following expansion formulas for odd powers of $x$ is also required: 
\begin{equation}
	\label{eq:odd_powers_x_expansion}
		x^p = \sum_{m=0}^\infty \frac{1}{s_m} \chi^{\{p\}}_m \psi_{m}^s(x), \qquad\quad\ \ \chi^{\{p\}}_m = \Big\langle  x^p,\phi_m^s(x)\Big\rangle ,
\end{equation}
where $p \in \{ 1, 3, 5, 7\}$. The expressions for the coefficients $\chi^{\{p\}}_m$ for $m\in\mathbb{N}$ are given in the Appendix, specifically equation~\eqref{eq:odd_powers_x_coeff_exp}. The convergence rates of the spectral series~\eqref{eq:odd_powers_x_expansion} were found to be first-order algebraic, i.e., ${\D|\chi^{\{p\}}_m|}/{\D|s_m|}\sim m^{-1}$ for all $p\in\{1, 3, 5, 7 \}$.  The numerical results agree with the rough estimates obtained by inspecting the expansion coefficients~\eqref{eq:odd_powers_x_coeff_exp}, namely, recalling that $s_m=1$ for $m\geq7$ and noting the presence of the quantity $({\D 6 \cos{\lambda_m^s}})/{\D \lambda_m^s}$ as a dominant term in all of the expressions $\chi^{\{p\}}_m$, for  $p\in\{1, 3, 5, 7 \}$. As an indicative example, we present the expansion of $x^7$ along with the exact expression in figure~\ref{fig:x_seven_power}(a). As expected, a very strong Gibbs effect is observed at the boundaries since $x^7$ does not satisfy any of BCs~\eqref{eq:6th_BCs}.
\begin{figure}[h!]
	\centering
	\includegraphics[width=0.98\textwidth]{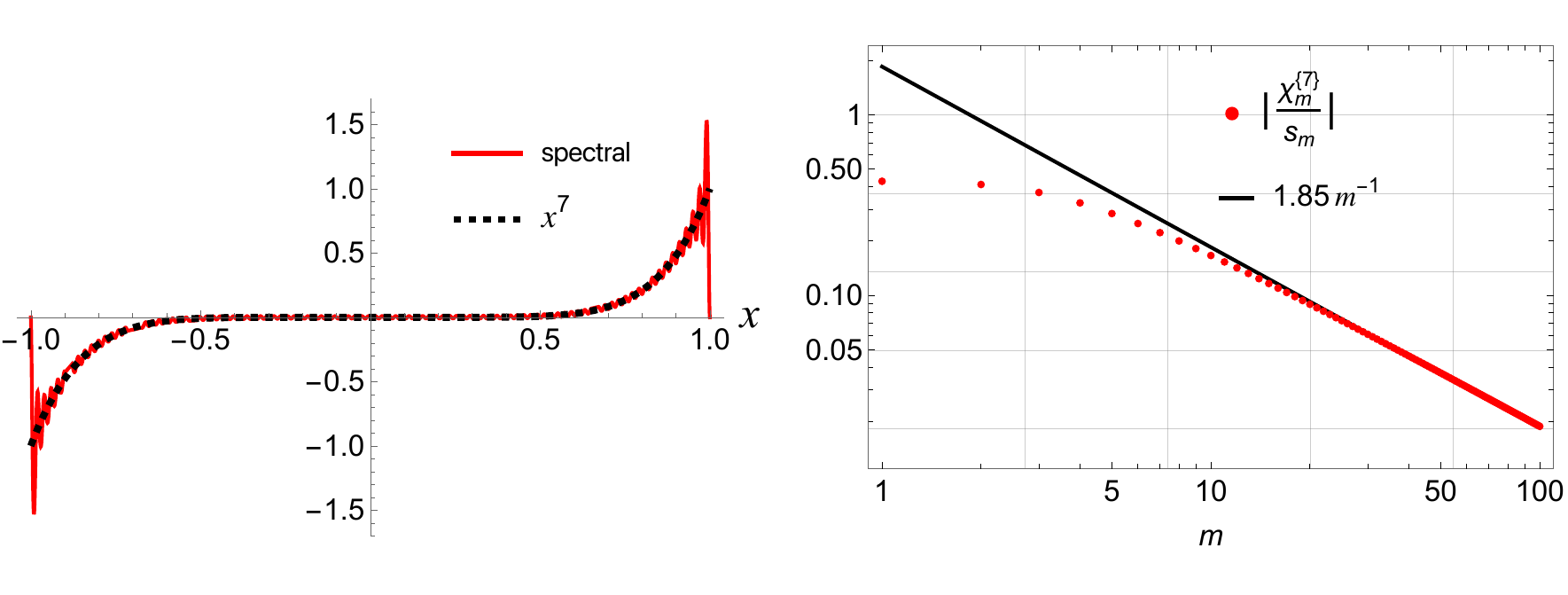}
	\text{ \quad (a) \qquad \qquad \quad \qquad \qquad \qquad  \qquad \qquad  \qquad \qquad  (b) \quad}
	\caption{The spectral expansion of $x^7$ into the odd eigenfunctions~\eqref{eq:efuncs_6_015_odd}, with $M=100$ terms. (a) The spectral approximation compared to $x^7$ itself. (b) Convergence rate of the absolute value of the spectral coefficients ${|\chi^{\{7\}}_m}/{s_m|}$  alongside the asymptotic best fit $1.85m^{-1}$  (for $m>20$). \label{fig:x_seven_power}}
\end{figure}

For the second model problem that we will consider in section~\ref{sec:model_problems} below, the expansion formula for trigonometric functions of the following form is also needed: 
\begin{equation}
	\label{eq:cos_k_expansion}
	\cos{k \pi x} = \sum_{m=1}^\infty \frac{1}{c_m} \ \hat{cs}^{\{k\}}_m \psi_{m}^c(x), \qquad\quad\ \   \hat{cs}^{\{k\}}_m = \Big\langle \cos{k \pi x} ,\phi_m^c(x)\Big\rangle  = \frac{6 (-1)^k (\lambda^c_m)^5 \sin{\lambda^c_m}}{(\lambda^c_m)^6-\pi ^6 k^6}\,,
\end{equation}
where $k\in\mathbb{Z}$. The coefficient $\hat{cs}^{\{k\}}_0 = 0$ for any $k\in\mathbb{Z}$. We have tested the formula using numerical computations, and as an indicative example, we present our results for $\cos{2\pi x}$, since it is the specific case that appears in the second model problem. These are depicted in figure~\ref{fig:Cos2PiX}. As before, a strong Gibbs effect is observed near the boundaries, and the convergence rate of the coefficients is first-order algebraic.
\begin{figure}[h!]
	\centering
	\includegraphics[width=0.98\textwidth]{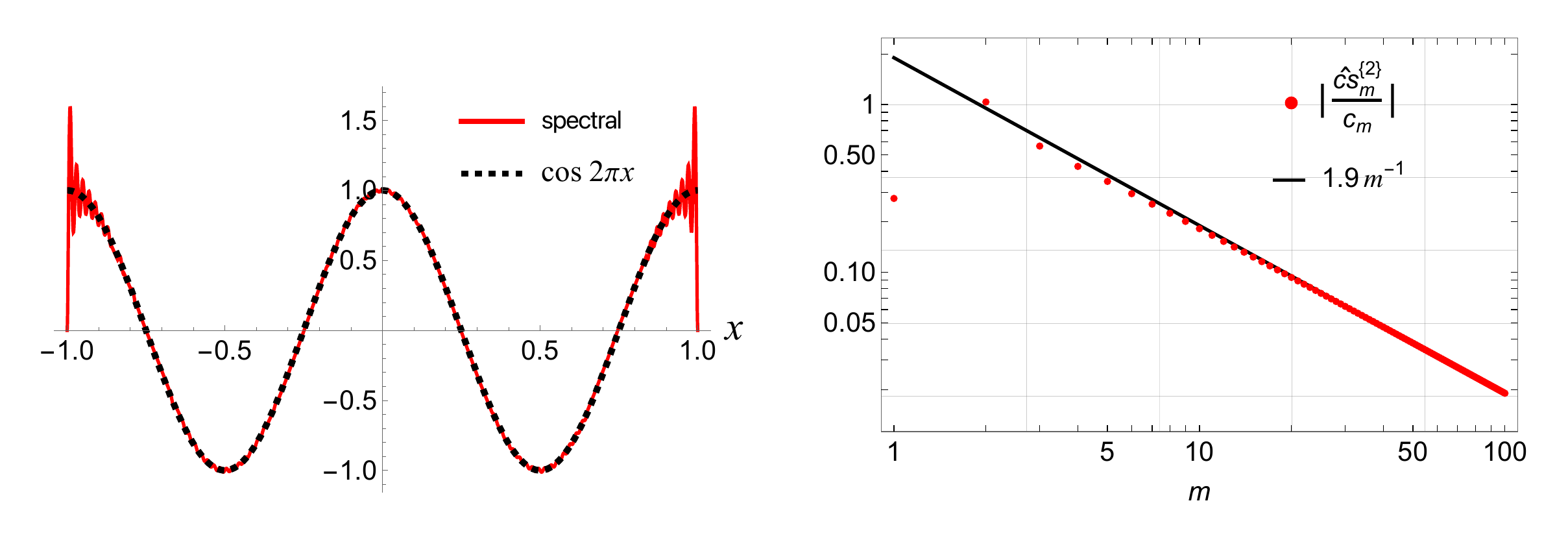}
	\text{ \quad (a) \qquad \qquad \quad \qquad \qquad \qquad  \qquad \qquad  \qquad \quad  (b) \quad}
	\caption{The spectral expansion of $\cos{2\pi x}$ back into the even eigenfunctions~\eqref{eq:efuncs_6_015_even}, with $M=100$ terms. (a) The spectral approximation compared to $\cos{2\pi x}$ itself. (b) Convergence rate of the absolute value of the spectral coefficients ${|\hat{cs}^{\{2\}}_m}/{c_m|}$  alongside the asymptotic best fit  $1.9m^{-1}$ (for $m\geq20$). \label{fig:Cos2PiX}}
\end{figure}

\section{Model problems: Results and discussion}\label{sec:model_problems}

To demonstrate the key features of the proposed Petrov--Galerkin spectral expansion for non-self-adjoint sixth-order-in-space parabolic-type equations, such as \eqref{eq:elastic_film_linear}, it suffices to consider steady problems, such that $\partial u/\partial t = 0$. 

In this section, motivated by our previous work on the self-adjoint case \cite{NectarIvan2023}, we use the \emph{method of manufactured solutions} \cite{Roache2002} to propose two model problems, with exact solutions, that demonstrate the spectral accuracy of the eigenfunction expansion.

\newpage

\subsection{Model problem I}

Model problem I is the BVP:
\begin{subequations}
	\label{eq:model_odd_1}
    \begin{align}\label{eq:model_odd_1:de}
    	\frac{\rd^6 u}{\rd x^6} &= f(x) ,\\
		\left.\frac{\rd u}{\rd x}\right|_{x=\pm1} = \left.\frac{\rd^2 u}{\rd x^2}\right|_{x=\pm1} = \left.\frac{\rd^5 u}{\rd x^5}\right|_{x=\pm1} &= 0 , \label{eq:model_odd_1:bc}
	\end{align}
where
	\begin{equation}
    	f(x) = -100\,800 x + 907\,200 x^3  - 1\,995\,840 x^5  + 1\,235\,520 x^7.
		\label{eq:model_odd_1:rhs}
	\end{equation}%
\end{subequations}
The model problem~\eqref{eq:model_odd_1} admits the exact solution:
\begin{equation}
	u(x) = u_\mathrm{exact}(x) = x(x-1)^6(x+1)^6.
	\label{eq:model_odd_1:exact}
\end{equation}
The exact solution~\eqref{eq:model_odd_1:exact} as well as the inhomogeneous term $f(x)$ in \eqref{eq:model_odd_1:de} are odd functions, and BCs~\eqref{eq:model_odd_1:bc} are symmetric. Thus, we need \emph{only} use the \emph{odd} eigenfunctions  $\Big\{\psi_n^s(x)\Big\}_{n=1}^{\infty}$ for the spectral expansion.		
		
To apply the Petrov--Galerkin spectral method developed in section~\ref{sec:Galerkin}, we introduce the truncated series 
\begin{equation}
	u(x) \approx u_{\mathrm{spectral}}(x) = \sum_{n=1}^{M} u_n^s \psi_n^s(x) .
	\label{eq:u_x_expansion_formula}
\end{equation}
Now, we substitute \eqref{eq:u_x_expansion_formula} into \eqref{eq:model_odd_1:de}, take inner products with $\Big\{\phi_{m}^s(x)\Big\}_{m=1}^{M}$\,, and employ the biorthogonality relations~\eqref{eq:biorth_six}, as in the derivation of \eqref{eq:dynamical_system}. 
Next, the coefficients $\big\{u_n^s\big\}_{n=1}^M$ are obtained as the solution of the \emph{diagonal} system of equations  
\begin{multline}
	\big[-s_n  (\lambda^s_{n})^6\big] \delta_{nm} u_{n}^s  =  -100\,800 \chi^{\{1\}}_{m} + 907\,200 \chi^{\{3\}}_{m} \\ 
	- 1\,995\,840 \chi^{\{5\}}_{m}  + 1\,235\,520 \chi^{\{7\}}_{m}\,,\qquad 	
	m=1,2,\ldots,M,
	\label{eq:model_1_um}
\end{multline}
where $\delta_{nm}$ is Kronecker's delta.
\begin{figure}[th!]
	\centering
	\includegraphics[width=0.98\textwidth]{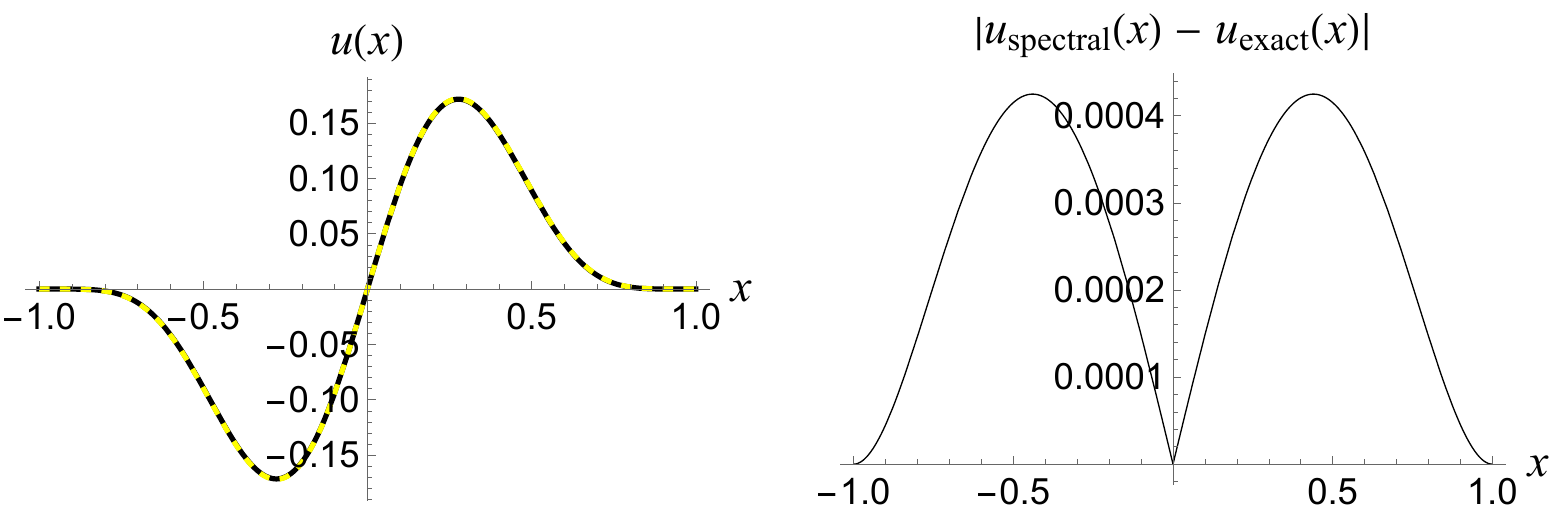}
	\text{ \quad (a) \qquad \qquad \quad \qquad \qquad \qquad  \qquad \qquad  \qquad \quad  (b) \quad}
	\caption{(a) The exact (black dashed) and spectral (yellow) solution profiles of model problem~\eqref{eq:model_odd_1}. The spectral series was computed with $M=100$ terms. (b) The absolute value of their difference $|u_{\rm spectral}(x)-u_{\rm exact}(x)|$ as a function of the spatial variable $x\in[-1,+1]$. \label{fig:model_prob_1_prof_error}}
\end{figure}

The exact solution~\eqref{eq:model_odd_1:exact} and the spectral approximation~\eqref{eq:u_x_expansion_formula}--\eqref{eq:model_1_um} using the odd sixth-order eigenfunctions are compared in figure~\ref{fig:model_prob_1_prof_error}(a), showing that they agree very well (at least visually). As seen in figure~\ref{fig:model_prob_1_prof_error}(b), the spectral expansion is  accurate within $5\times 10^{-4}$.

The convergence rate of the spectral series~\eqref{eq:u_x_expansion_formula} for problem~\eqref	{eq:model_odd_1} is shown in figure~\ref{fig:model_prob_1_coeffs}. It is important to note that the overall convergence rate of the spectral series is $\mathrm{O}(m^{-6.9})$, despite the fact that (i) the right-hand side (RHS) function~\eqref{eq:model_odd_1:rhs} does not satisfy the BCs~\eqref{eq:model_odd_1:bc} leading to a strong Gibbs effect, and (ii) the convergence rate of power terms forming the RHS is a very modest $\mathrm{O}(m^{-1})$.
\begin{figure}[h!]
	\centering
	\includegraphics[width=0.5\textwidth]{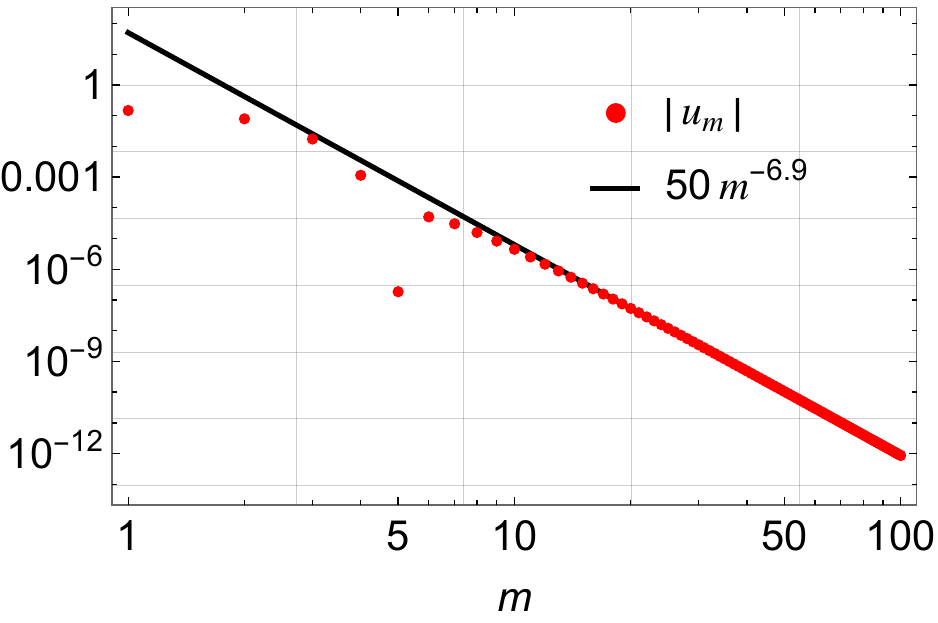}
	\caption{Log-log plot demonstrating the convergence rate of the coefficients of the approximate spectral series solution~\eqref{eq:u_x_expansion_formula} of model problem~\eqref{eq:model_odd_1}. Red dots: the absolute value of the spectral coefficients $|u_m|$.  Solid black line: The best fit $50m^{-6.9}$. The spectral series was truncated at $M=100$ terms. }
	\label{fig:model_prob_1_coeffs}
\end{figure}
The rapid convergence of the spectral approximation is an innate characteristic of our Petrov--Galerkin spectral method and the fact that the eigenfunctions $\psi^s_n(x)$ satisfy \eqref{eq:ODE_hg_6}. Specifically, due to the sixth derivative term, quantities that are proportional to $(\lambda_m^s)^6$ appear on the main diagonal of the coefficient matrix of the diagonal (algebraic) linear system~\eqref{eq:model_1_um} for the coefficients. Therefore, in effect, to obtain the sought coefficients, we divide each element of the RHS column vector by the corresponding sixth-order term $-s_m (\lambda_m^s)^6$, where $s_m$ is given by \eqref{eq:biorthog_odd}.  
We assert that this convergence rate indicates that the accuracy of the spectral solution would have been even higher, but for the very large coefficients of the RHS polynomial function~\eqref{eq:model_odd_1:rhs}. These large coefficients cannot be avoided when constructing a suitable sixth-order model problem that has an exact odd polynomial solution.

\subsection{Model problem II}

Model problem II is the BVP:
\begin{subequations}
	\label{eq:model_even_2}
	\begin{align}\label{eq:model_even_2:de}
		\frac{\rd^6 u}{\rd x^6} - 256\pi^4 \frac{\rd^2 u}{\rd x^2}&= f(x) ,\\
		\left.\frac{\rd u}{\rd x}\right|_{x=\pm1} = \left.\frac{\rd^2 u}{\rd x^2}\right|_{x=\pm1} = \left.\frac{\rd^5 u}{\rd x^5}\right|_{x=\pm1} &= 0 , \label{eq:model_even_2:bc}
	\end{align}
	where
	\begin{equation}
		f(x) = -960\pi^6\cos{2\pi x} .
		\label{eq:model_even_2:rhs}
	\end{equation}%
\end{subequations}
The model problem~\eqref{eq:model_even_2} admits the exact trigonometric solution:
\begin{equation}
	u(x) = u_\mathrm{exact}(x) = \cos{4\pi x} - \cos{2\pi x} .
	\label{eq:model_even_2:exact}
\end{equation}
The exact solution~\eqref{eq:model_even_2:exact} as well as the inhomogeneous term $f(x)$ in \eqref{eq:model_even_2:de} are \emph{even} functions, and BCs~\eqref{eq:model_even_2:bc} are symmetric. Thus, we need \emph{only} use the \emph{even} eigenfunctions  $\Big\{\psi_n^c(x)\Big\}_{n=0}^{\infty}$ for the spectral expansion.		

To apply the Petrov--Galerkin spectral method developed in section~\ref{sec:Galerkin}, we approximate the sought function $u(x)$ with the truncated spectral series 
\begin{equation}
	u(x) \approx u_{\mathrm{spectral}}(x) = \sum_{n=0}^{M} u_n^c \psi_n^c(x) ,
	\label{eq:u_even_expansion_formula}
\end{equation}
and expand the RHS of \eqref{eq:model_even_2:rhs} into even eigenfunctions $\Bigl\{\psi_n^c(x)\Bigr\}_{n=0}^M$ with the aid of formula~\eqref{eq:cos_k_expansion}, obtaining
\begin{equation}
		f(x) = -960\pi^6\cos{2\pi x} \approx  -960\pi^6\sum_{n=0}^{M} \frac{\D \hat{cs}^{\{2\}}_m }{\D c_m} \psi_n^c(x) .
	\label{eq:rhs_spectral_model_even_2}
\end{equation}
Next, we introduce \eqref{eq:u_even_expansion_formula} and \eqref{eq:rhs_spectral_model_even_2} into \eqref{eq:model_even_2:de}, take successive inner products with $\Bigl\{\phi_{m}^c(x)\Bigr\}_{m=0}^{M}$\,, and employ the biorthogonality relations~\eqref{eq:biorth_six}, to obtain the linear algebraic system
\begin{equation}
	\big[c_n  (\lambda^c_{n})^6\big] \delta_{nm} u_n^c  + 256\pi^4\sum_{n=1}^M\beta_{nm}^c u_n^c = 960\pi^6 \hat{cs}^{\{2\}}_m \,,\qquad 	
	m=1,2,\ldots,M,
	\label{eq:model_2_um}
\end{equation}
where $\beta_{nm}^c$ is as in~\eqref {eq:sec_deriv_even_form} and $\delta_{nm}$ is Kronecker's delta. Note that since $\beta_{n0}^c=0$ and  $\hat{cs}^{\{2\}}_0 = 0$ it follows that $u_0^c = 0$.  The remaining spectral coefficients $\big\{u_n^c\big\}_{n=1}^M$ are obtained by solving system~\eqref{eq:model_2_um}. The coefficient matrix of this system is full; however, it is relatively small in size ($100\times100$) and can therefore be solved using any standard direct method. The size of the system is dictated by the number of terms  $M$ in expansion~\eqref{eq:u_even_expansion_formula}. The reason why it is not necessary to exceed $M=100$ in \eqref{eq:u_even_expansion_formula} will become apparent in the discussion that follows. 
\begin{figure}[th!]
	\centering
	\includegraphics[width=0.98\textwidth]{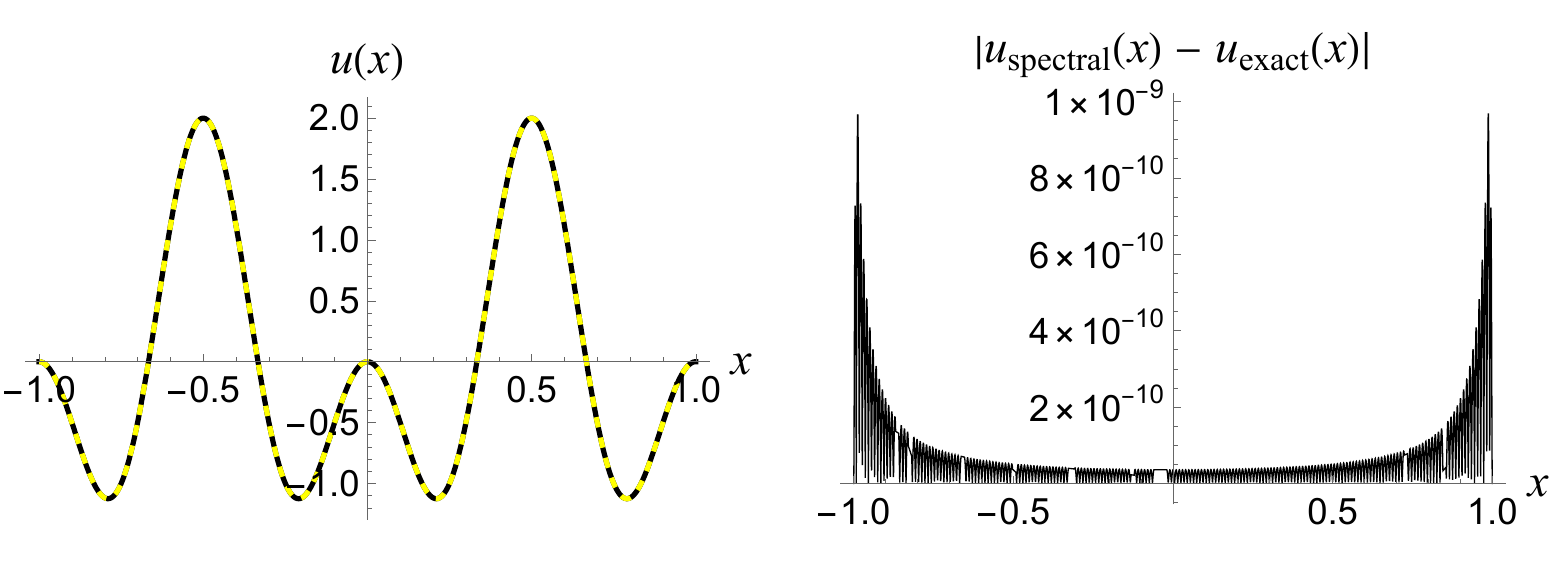}
	\text{ \quad (a) \qquad \qquad \quad \qquad \qquad \qquad  \qquad \qquad  \qquad \qquad  (b)}
	\caption{(a)  The exact (black dashed) and spectral (yellow) solution profiles of model problem~\eqref{eq:model_even_2}. The spectral series was computed with $M=100$ terms. (b) The absolute value of their difference $|u_{\rm spectral}(x)-u_{\rm exact}(x)|$ as a function of the spatial variable $x\in[-1,+1]$. \label{fig:model_prob_2_prof_error}}
\end{figure}

The profiles of the exact solution~\eqref{eq:model_even_2:exact} and the spectral approximation~\eqref{eq:u_even_expansion_formula} with \eqref{eq:model_2_um}, for $M=100$ terms are plotted together in figure~\ref{fig:model_prob_2_prof_error}(a). The two profiles are indistinguishable, which indicates an excellent agreement between the two solutions. Indeed, figure~\ref{fig:model_prob_2_prof_error}(b) clearly shows that the absolute error is less than $10^{-9}$. This example demonstrates the very high accuracy of the proposed method. 

The convergence rate of the spectral series~\eqref{eq:u_even_expansion_formula} is examined in figure~\ref{fig:model_prob_2_coeffs}, where the absolute values of the spectral coefficients $u_m$ are plotted along with the best fit curve $4500 m^{-6.9}$ using a log-log scale.  We note that this is the same convergence rate of O$(m^{-6.9})$ that was found in figure~\ref{fig:model_prob_1_coeffs} when solving model problem I. And this, despite the very strong Gibbs effect and slow convergence rates (of O$(m^{-0.8})$ and O$(m^{-1})$, respectively) observed in the spectral expansions of  $(\psi^c_m)''$  and $\cos{2\pi x}$.  We can therefore conclude that the convergence rates of the spectral series obtained using our Petrov--Galerkin spectral method based on the sixth-order eigenfunctions will be rapid, exceeding the expected O$(m^{-6})$ algebraic decay. 

\begin{figure}[h!]
	\centering
	\includegraphics[width=0.5\textwidth]{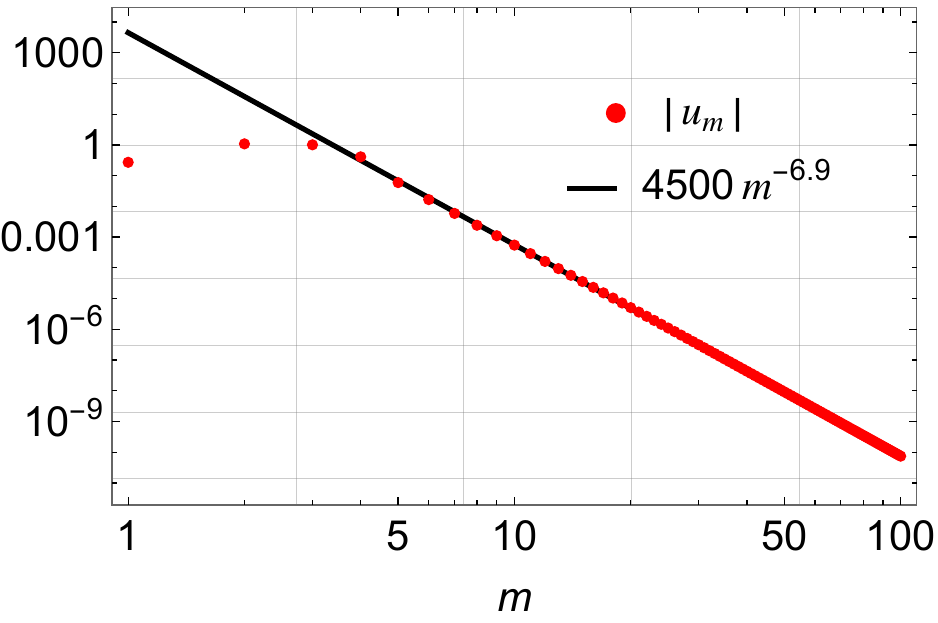}
	\caption{Log-log plot demonstrating the convergence rate of the coefficients of the approximate spectral series solution~\eqref{eq:u_even_expansion_formula} of model problem~\eqref{eq:model_even_2}. Red dots: the absolute value of the spectral coefficients $|u_m|$.  Solid black line: The best fit $4500m^{-6.9}$. The spectral series was truncated at $M=100$ terms. }
	\label{fig:model_prob_2_coeffs}
\end{figure}

\clearpage

\section{Conclusion}\label{sec:Conclusions}

We studied the non-self-adjoint \emph{sixth}-order boundary value problem arising from modeling the bending-dominated infinitesimal deformations and adjustment of an elastic-plated thin-film flow in a closed trough. Our study extends the previous investigation of Gabay \textit{et al}.~\cite{Gabay2023} on pinned films with surface tension in closed troughs, which led to a \emph{fourth}-order non-self-adjoint eigenvalue problem. Further, building on our earlier work \cite{NectarIvan2023} on the self-adjoint sixth-order case, we constructed the sixth-order biorthogonal eigenfunctions and, appealing to some of the classical results of Birkhoff \cite{Birkhoff1908}, we derived expansion formulas for various functions and the second derivatives of the eigenfunctions.

Further, we demonstrated the use of the biorthogonal eigenfunctions in a Petrov--Galerkin spectral method. Specifically, we applied the spectral method to two model problems (ordinary differential equations) with manufactured solutions. For each of the model problems, featuring polynomial and trigonometric functions in their solutions, respectively, we found that the convergence rate of the spectral series is rapid, exceeding the expected sixth-order algebraic rate (i.e., $|a_N| = \mathrm{O}(N^{-6})$, where $a_N$ are the coefficients of the expansion, and $N$ is the number of terms in the spectral series). Thus, we provided further numerical evidence for the validity of Birkhoff's \cite{Birkhoff1908} theory for higher-order non-self-adjoint eigenvalue problems.
    
In future work, it could be of interest to consider an unsteady problem and solve the dynamical system \eqref{eq:dynamical_system} numerically, as we did in our previous work \cite{NectarIvan2025}.



\ack ICC would like to acknowledge the US National Science Foundation, which supported his research on interfacial dynamics under grant CMMI-2029540, from which this work originated.

\appendix

\section*{Appendix}

\setcounter{section}{1}

For $m\ge1$, the expressions for the coefficients with in the expansion formulas~\eqref{eq:odd_powers_x_expansion} for the odd powers of $x$ are:
\begin{subequations}
	\label{eq:odd_powers_x_coeff_exp}
    \begin{align}
		\chi^{\{1\}}_m&= \frac{2\sqrt{3} \cos{\lambda_m^s} \sinh{(\sqrt{3} \lambda_m^s)}}{(\lambda_m^s)^2 \left(\cos{\lambda_m^s} +\cosh{(\sqrt{3} \lambda_m^s)}\right)}-\frac{6 \cos{\lambda_m^s}}{\lambda_m^s} , \\
		\chi^{\{3\}}_m&= \frac{6 \sqrt{3}  \cos{\lambda_m^s} \sinh{(\sqrt{3} \lambda_m^s)}}{(\lambda_m^s)^2 \left( 	\cos{\lambda_m^s} + \cosh{(\sqrt{3} \lambda_m^s)}\right)}-\frac{6 \cos{\lambda_m^s}}{\lambda_m^s} , \\
		\chi^{\{5\}}_m&= \frac{10 \sqrt{3} \left((\lambda_m^s)^4-24\right)  \cos{\lambda_m^s} \sinh{(\sqrt{3} 	\lambda_m^s)}}{(\lambda_m^s)^6 \left(\cos{\lambda_m^s} + \cosh{(\sqrt{3} \lambda_m^s)} \right)}-\frac{6  \cos{\lambda_m^s} }{\lambda_m^s} , \\
		\chi^{\{7\}}_m&= \frac{2 \cos{\lambda_m^s}}{{(\lambda_m^s)^8}} \left(\frac{7 \sqrt{3} \left((\lambda_m^s)^6 - 360 	(\lambda_m^s)^2 - 720\right) \sinh{(\sqrt{3} \lambda_m^s)}}{\cos{\lambda_m^s} +  \cosh{(\sqrt{3} \lambda_m^s)}} - 3 \lambda_m^s \left((\lambda_m^s)^6 - 5040\right)\right).
	\end{align}	
\end{subequations}


\clearpage

\section*{References}
\bibliography{Sixth_Order_clamped.bib}

\providecommand{\newblock}{}
\begin{thebibliography}{10}
\expandafter\ifx\csname url\endcsname\relax
  \def\url#1{{\tt #1}}\fi
\expandafter\ifx\csname urlprefix\endcsname\relax\def\urlprefix{URL }\fi
\providecommand{\eprint}[2][]{\href{https://arxiv.org/abs/#2}{#2}}

\bibitem{Gabay2023}
Gabay I, Bacheva V, Ilssar D, Bercovici M, Ramos A and Gat A 2023 {\em J. Fluid
  Mech.\/} \href{http://dx.doi.org/10.1017/jfm.2023.550}{{\bf 969} A17}
  (\textit{Preprint} \eprint{arXiv:2209.04531})

\bibitem{NectarIvan2023}
Papanicolaou N~C and Christov I~C 2023 {\em J. Phys.: Conf. Ser.\/}
  \href{http://dx.doi.org/10.1088/1742-6596/2675/1/012016}{{\bf 2675} 012016}
  (\textit{Preprint} \eprint{arXiv:2308.00673})

\bibitem{NectarIvan2024}
Papanicolaou N~C and Christov I~C 2024 {\em J. Phys.: Conf. Ser.\/}
  \href{http://dx.doi.org/10.1088/1742-6596/2910/1/012032}{{\bf 2910} 012032}

\bibitem{Hewitt2015}
Hewitt I~J, Balmforth N~J and De~Bruyn J~R 2015 {\em Eur. J. Appl. Math.\/}
  \href{http://dx.doi.org/10.1017/S0956792514000291}{{\bf 26} 1--31}

\bibitem{Tulchinsky2019}
Tulchinsky A and Gat A~D 2019 {\em J. Sound Vib.\/}
  \href{http://dx.doi.org/10.1016/j.jsv.2018.08.047}{{\bf 438} 83--98}

\bibitem{Boyko2019}
Boyko E, Eshel R, Gommed K, Gat A~D and Bercovici M 2019 {\em J. Fluid Mech.\/}
  \href{http://dx.doi.org/10.1017/jfm.2018.967}{{\bf 862} 732--752}
  (\textit{Preprint} \eprint{arXiv:1703.06820})

\bibitem{Boyko2020}
Boyko E, Ilssar D, Bercovici M and Gat A~D 2020 {\em Phys. Rev. Fluids\/}
  \href{http://dx.doi.org/10.1103/PhysRevFluids.5.104201}{{\bf 5} 104201}

\bibitem{NectarIvan2025}
Papanicolaou N~C and Christov I~C 2025 {\em J. Eng. Math\/}
  \href{http://dx.doi.org/10.1007/s10665-024-10409-4}{{\bf 150} 1}
  (\textit{Preprint} \eprint{arXiv:2402.18740})

\bibitem{Flitton2004}
Flitton J~C and King J~R 2004 {\em Eur. J. Appl. Math.\/}
  \href{http://dx.doi.org/10.1017/S0956792504005753}{{\bf 15} 713--754}

\bibitem{Hosoi2004}
Hosoi A~E and Mahadevan L 2004 {\em Phys. Rev. Lett.\/}
  \href{http://dx.doi.org/10.1103/PhysRevLett.93.137802}{{\bf 93} 137802}

\bibitem{Pedersen2019}
Pedersen C, Niven J~F, Salez T, Dalnoki-Veress K and Carlson A 2019 {\em Phys.
  Rev. Fluids\/} \href{http://dx.doi.org/10.1103/PhysRevFluids.4.124003}{{\bf
  4} 124003} (\textit{Preprint} \eprint{arXiv:1902.10470})

\bibitem{Oron1997}
Oron A, Davis S~H and Bankoff S~G 1997 {\em Rev. Mod. Phys.\/}
  \href{http://dx.doi.org/10.1103/RevModPhys.69.931}{{\bf 69} 931--980}

\bibitem{Craster2009}
Craster R~V and Matar O~K 2009 {\em Rev. Mod. Phys.\/}
  \href{http://dx.doi.org/10.1103/RevModPhys.81.1131}{{\bf 81} 1131--1198}

\bibitem{Tulchinsky2016}
Tulchinsky A and Gat A~D 2016 {\em J. Fluid Mech.\/}
  \href{http://dx.doi.org/10.1017/jfm.2016.418}{{\bf 800} 517--530}
  (\textit{Preprint} \eprint{arXiv:1512.00730})

\bibitem{MartinezCalvo2020}
Mart{\'{i}}nez-Calvo A, Sevilla A, Peng G~G and Stone H~A 2020 {\em J. Fluid
  Mech.\/} \href{http://dx.doi.org/10.1017/jfm.2019.994}{{\bf 885} A25}
  (\textit{Preprint} \eprint{arXiv:1902.07167})

\bibitem{Duprat2011}
Duprat C, Aristoff J~M and Stone H~A 2011 {\em J. Fluid Mech.\/}
  \href{http://dx.doi.org/10.1017/jfm.2019.994}{{\bf 679} 641--654}
  (\textit{Preprint} \eprint{arXiv:1008.3702})

\bibitem{Birkhoff1908}
Birkhoff G~D 1908 {\em Trans. Am. Math. Soc.\/}
  \href{http://dx.doi.org/10.1090/S0002-9947-1908-1500818-6}{{\bf 9} 373--395}

\bibitem{CodLev}
Coddington E~A and Levinson N 1955 {\em Theory of Ordinary Differential
  Equations\/} (New York, NY: McGraw-Hill)

\bibitem{Chandrasek}
Chandrasekhar S 1961 {\em Hydrodynamic and Hydromagnetic Stability\/} (Oxford:
  Clarendon Press)

\bibitem{Chri_Annuary_Chandra}
Christov C~I 1982 {\em Ann. Univ. Sof., Fac. Math. Mech.\/} {\bf 76} 87--113

\bibitem{PCB_IJNMF}
Papanicolaou N~C, Christov C~I and Homsy G~M 2009 {\em Int. J. Num. Meth.
  Fluids\/} \href{http://dx.doi.org/10.1002/fld.1845}{{\bf 59} 945--965}

\bibitem{Mathematica}
{Wolfram Research, Inc} 2024 {M}athematica, {V}ersion 14.0 {Champaign, IL}
  \urlprefix\url{https://www.wolfram.com/mathematica}

\bibitem{Shen}
Shen J, Tang T and Wang L~L 2011 {\em Spectral Methods: Algorithms, Analysis
  and Applications\/} ({\em Springer Series in Computational Mathematics\/}
  vol~41) (Berlin/Heidelberg: Springer-Verlag)

\bibitem{Roache2002}
Roache P~J 2002 {\em ASME J. Fluids Eng.\/}
  \href{http://dx.doi.org/10.1115/1.1436090}{{\bf 124} 4--10}

\end{thebibliography}

\end{document}